\newcommand{\p}{{\mathfrak p}}
\newcommand{\m}{{M}}
\newcommand{\sss}{{\rm Spec}}
\newcommand{\tmax}{t{\rm -Max}}
\def\b{\mathfrak b} 
\def\m{\mathfrak m} 
\def\gg{\ms G} 
\newcommand{\ff}{\mathscr F}
\newcommand{\bbf}{{\bf f}}
\newcommand{\ms}{\mathscr}
\newcommand{\f}{\mathfrak}
\newcommand{\Max}{{\rm Max}}
\newcommand{\sMax}{\star-\Max}
\newcommand{\z}{{\ldots}}
\theoremstyle{break}
\newtheorem{thm}{ \textsc{Theorem}}[section]
\newtheorem{cor}[thm]{ \textsc{Corollary}}
\newtheorem{lem}[thm]{ \textsc{Lemma}}
\newtheorem{prop}[thm]{ \textsc{Proposition}}
\theoremstyle{remark}
\newtheorem{remarks}[thm]{Remarks}
\newtheorem{ex}[thm]{Example}
\newtheorem{defn}[thm]{\bf Definition}
\begin{document}
\title[Star-Invertibility and $t$-finite character]
{Star-Invertibility and $t$-finite character in Integral Domains}
\thanks{2000 {\it Mathematics Subject Classification}.
Primary: 13A15,  13F05; Secondary: 13B30, 13G05.
\newline
{\it Key words and phrases} Finite character, invertible ideal,
star-operation.}
\author[C.A. Finocchiaro]{Carmelo Antonio Finocchiaro}
\address{Dipartimento di Matematica\\ Universit\`{a}
degli studi Roma Tre\\ Largo San Leonardo Murialdo 1, 00146 Roma,
Italy} \email{carmelo@mat.uniroma3.it}
\author[G. Picozza]{Giampaolo Picozza}
\address{Laboratoire d'Analyse, Topologie, Probabilités,
Facult\'e des Sciences de Marseille Saint-Jer\^ome 13397 Marseille
Cedex 20, France} \email {giampaolo.picozza@univ-cezanne.fr}
\author[F. Tartarone]{Francesca Tartarone}
\address{Dipartimento di Matematica\\ Universit\`{a}
degli studi Roma Tre\\ Largo San Leonardo Murialdo 1, 00146 Roma,
Italy} \email{tfrance@mat.uniroma3.it}

\date{\today}
\begin{abstract} Let $A$ be an integral domain. We study new
conditions on families of integral ideals of $A$ in order to get
that $A$ is of $t$-finite character (i.e., each nonzero element of
$A$ is contained in finitely many $t$-maximal ideals). We also
investigate  problems connected with   the local invertibility of
ideals.
\end{abstract}

\maketitle

\section*{Introduction}

In their series of papers about flatness of ideals (\cite{GV,
flat3}), S. Glaz and W. Vasconcelos address the problem of when flat
ideals are invertible. Among other results, they conjecture that
(\cite[\S 3]{GV}):

\smallskip
(C1) \lq\lq \textit{In an H-domain (i.e. a domain in which
$t$-maximal ideals are divisorial), faithfully flat ideals are
projective}".

\smallskip
Since, for ideals in a domain, projective is equivalent to
invertible and, by a result of D.D. Anderson and D.F. Anderson
\cite{aa2}, faithfully flat is equivalent to locally principal, the
condition \lq\lq faithfully flat ideals are projective"    can be
restated as ``locally principal ideals are invertible''. This last
condition appears again in a paper by S. Bazzoni \cite{bazzoni}, who
conjectures that:

\smallskip
(C2) \lq\lq \textit{Pr\"ufer domains with this property have the
finite character on maximal ideals}".

\smallskip

In \cite{PT}, the conjectures (C1) and (C2) are put in relation and
it is shown that they are incompatible. Moreover, an example in
which (C1) fails is exhibited. In the same time, several proofs of
Bazzoni's conjecture  appeared: W.C. Holland, J. Martinez, W. Wm. Mc
Govern and M. Tesemma provided a proof of the original conjecture
(\cite{hmmt}), relying strongly on methods of the theory of lattice
ordered groups (their proof has been recently translated in a ring
theoretic language in \cite{mc}); F. Halter-Koch gave a proof in the
more general context of $r$-Pr\"ufer monoids \cite{hk}; M. Zafrullah
\cite{zaf} also studied Bazzoni's conjecture for P$v$MD's but
developing different techniques from F. Halter Koch's paper.

In this paper, we further generalize these results. As pointed out
in \cite{PT}, the finite character on maximal ideal is not necessary
to have that locally principal ideals are invertible. In fact,
Noetherian domains have obviously this property, but they have not
necessarily the finite character. However, Noetherian domains have
the $t$-finite character, which turns out to be a sufficient
condition to obtain the invertibility of locally principal ideals
(on the other hand, Example \ref{inv not tinv} shows  that
$t$-finite character is not necessary, answering a question posed in
\cite{PT}).

So, as in F. Halter-Koch and M. Zafrullah's papers (\cite{zaf, hk}),
we focus on $t$-versions of Bazzoni's conjecture. We study the
following three different conditions, which are all equivalent to
the condition \lq\lq each locally principle ideal is invertible" in
the case of Pr\"ufer domains:

\smallskip

1) $t$-locally $t$-finite ideals are $t$-finite;

2) $t$-locally principal ideals are $t$-invertible;

3) $t$-locally $t$-invertible $t$-ideals are $t$-invertible;

\smallskip
All conditions 1)-3) hold when the domain has the $t$-finite
character. Conversely we prove that condition 1) implies the
$t$-finite character with the fairly mild hypothesis of the
$t$-local $v$-coherence of the domain (Theorem
\ref{thm:characterization t-finite}). 

Finally, with similar techniques, we show that condition 2) implies
the $t$-finite character in domains which are locally GCD
(Proposition \ref{locgcd}) and that condition  3) implies the
$t$-finite character in domains which are locally P$v$MD
(Proposition \ref{locpvmd}).

\bigskip

Briefly, we recall some terminology and definitions that we will
freely use in the   paper.

Let $A$ be an integral domain with quotient field $K$. Recall that a
star operation on $A$ is a map $\star$ from the set $F(A)$  of
nonzero fractional ideals of $A$ in itself, $I \mapsto I^\star$,
verifying the following properties  for all $I,J \in F(A)$, $0 \neq
x \in K$:

\begin{enumerate}[(a)]
\item $A^\star = A$, $(xI)^\star = xI^\star$;

\item $I \subseteq J \Rightarrow I^\star \subseteq J^\star$;

\item $(I^\star)^\star = I^\star$.
\end{enumerate}

A star operation $\star$ on $A$  is said \textit{finite type star
operation}  if for all $I \in F(A)$,
$$I^\star = \bigcup \{J^\star : J \subseteq I, J \mbox{ is finitely generated}
\}.$$

An ideal $I \in F(A)$ such that $I^\star = I$ is called a
$\star-$ideal. As usual,  $I^{-1} := (A:I) = \{x \in K; xI \subseteq
A\}$,   and $I$ is $\star-$invertible if $(II^{-1})^\star = A$. An
ideal $I$ is $\star-$finite if there exists a finitely generated
ideal $J$ such that $J^\star = I^\star$. If $\star$ is of finite
type, $J$ can always be taken inside $I$. A $\star-$invertible ideal
is $\star-$finite.

If $\star$ is a   finite type star operation, the set of
$\star-$ideals has (proper) maximal elements, which are prime ideals
and are called $\star-$maximal ideals (this set is denoted by
$\star-\Max(A)$). Moreover, every integral $\star-$ideal is
contained in a $\star-$maximal ideal. We say that $A$ has the
$\star-$finite character if each $\star-$ideal (equivalently, each
element) of $A$ is contained in   finitely many  $\star-$maximal
ideals.

We will call an integral ideal of $A$ simply {\it ideal}. We will
also denote by $\mathcal I^\star(A)$ (resp. $\mathcal I^\star_{\rm
fin}(A)$) the collection of all $\star-$ideals (resp. $\star-$finite
$\star-$ideals) of $A$.

The identity is a finite type star operation and it is usually
denoted by $d$. Other classical examples of star operations are:

\begin{itemize}
\item the $v$-operation, defined by $I \mapsto I^v = (A:(A:I))=
(I^{-1})^{-1}$;

\item the $t$-operation, defined by $$I \mapsto I^t = \bigcup \{J^v
: J \subseteq I, J \mbox{ is finitely generated} \}.$$ The
$t$-operation is of finite type and it is maximal in a way that if
$\star$ is another finite type star operation on $A$, then $I^\star
\subseteq I^t$ for each $I \in F(A)$. \end{itemize}

\section{Comaximal families of ideals}

If $\ms F$ is a nonempty collection of subsets of a set, we denote
by $\bigcap \ms F$ the intersection of all the members of $\ms F$.
Moreover, if $\ms F$ is a  finite collection of ideals of a ring, we
denote by $\prod \ms F$ the product of all members of $\ms F$.

 If $(X,\leq)$ is a partially ordered set, we shall denote by
$\Max_{\leq}(X)$ the set of all the maximal  elements of $X$.  With
these notation, if $\star$ is of finite type, then we have that
$\star-\Max(A)=\Max_{\subseteq}(\mathcal
I^\star(A)\setminus\{A\})$).

If $E$ is a subset of $A$,  $V(E)$ denotes the set of all the prime
ideals of $A$ containing $E$.

\begin{defn}
Let $A$ be an integral domain and $\star$ a star operation on $A$.
\begin{enumerate}
\item[\rm (i)] { We say that an ideal $\f a$ of $A$ \textit{satisfies the property ($\star$-c.a.)}
 ($\star$-closure under addition) if, for each pair $\f a_1,\f a_2$ of proper $\star-$finite $\star-$ideals of $A$ containing $\f a$, then $(\f a_1+\f a_2)^\star$ is still a proper ideal of $A$.}
\item[\rm (ii)] If $\ms F$ is a collection of ideals of $A$ and $a\in A\setminus \{0\}$, we say that $\ms F$ is a  {\rm a $\star$-comaximal collection over $a$} if $a\in \bigcap \ms F$ and $(\f a_1+\f a_2)^\star =A$, for every $\f a_1,\f a_2\in \ms F$, with $\f a_1\neq \f a_2$.
\end{enumerate}
\end{defn}

\begin{lem}\label{uno}
Let $A$ be an integral domain and $\star$ be a finite type star
operation on $A$. Assume that { $\f a$ is a proper $\star-$finite
$\star-$ideal satisfying the property ($\star$-c.a.)}. Then
$$
\f m_{\f a}:=\{x\in A:((x)+\f a)^\star \subsetneq A\}
$$
is the only $\star-$maximal ideal containing $\f a$.
\end{lem}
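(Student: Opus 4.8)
The plan is to prove that $\f m_{\f a}$ is a \emph{proper $\star$-ideal} of $A$ which contains every $\star$-maximal ideal lying over $\f a$. Since $\star$ has finite type, the $\star$-maximal ideals are exactly the $\subseteq$-maximal members of $\mathcal I^\star(A)\setminus\{A\}$; hence a proper $\star$-ideal that contains a $\star$-maximal ideal must coincide with it. Combining this with the two displayed properties of $\f m_{\f a}$ — and with the fact that the proper $\star$-ideal $\f a$ is contained in at least one $\star$-maximal ideal — forces $\f m_{\f a}$ itself to be a $\star$-maximal ideal over $\f a$ and to be the only one.

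First I would clear away the routine points. For $x\in\f a$ one has $((x)+\f a)^\star=\f a\subsetneq A$, so $\f a\subseteq\f m_{\f a}$; and $1\notin\f m_{\f a}$ because $((1)+\f a)^\star=A$. Stability of $\f m_{\f a}$ under multiplication by $A$ is immediate from $(rx)\subseteq(x)$ and monotonicity of $\star$. The heart of the matter is the following claim: \emph{if $x_1,\dots,x_n\in\f m_{\f a}$ and $J=(x_1,\dots,x_n)$, then $(J+\f a)^\star\subsetneq A$.} Here is where the hypotheses are used. Since $\f a$ is $\star$-finite, write $\f a=C^\star$ with $C$ finitely generated; then for each $i$ the ideal $\f b_i:=((x_i)+\f a)^\star=((x_i)+C)^\star$ is a proper (because $x_i\in\f m_{\f a}$) $\star$-finite $\star$-ideal containing $\f a$. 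The property ($\star$-c.a.) is only stated for pairs, so I would first check that it iterates: if $\f b=D^\star$ and $\f b'=E^\star$ are proper $\star$-finite $\star$-ideals containing $\f a$, with $D,E$ finitely generated, then $(\f b+\f b')^\star$ is again an object of the same type — it is a $\star$-ideal, it contains $\f a$, it is proper by ($\star$-c.a.), and it is $\star$-finite because $(\f b+\f b')^\star=(\f b^\star+\f b'^\star)^\star=(D+E)^\star$. Iterating over the $n$ generators gives $(\f b_1+\dots+\f b_n)^\star\subsetneq A$, and since $J+\f a\subseteq\f b_1+\dots+\f b_n$ we get $(J+\f a)^\star\subseteq(\f b_1+\dots+\f b_n)^\star\subsetneq A$, which is the claim.

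The claim yields everything else. Applied to $J=(x,y)$ it shows that $x,y\in\f m_{\f a}$ implies $((x+y)+\f a)^\star\subseteq(J+\f a)^\star\subsetneq A$, i.e.\ $x+y\in\f m_{\f a}$; so $\f m_{\f a}$ is a proper (nonzero, since $0\neq\f a\subseteq\f m_{\f a}$) ideal of $A$. To see it is a $\star$-ideal, use the finite-type formula $\f m_{\f a}^\star=\bigcup\{J^\star: J\subseteq\f m_{\f a},\ J\text{ finitely generated}\}$: for such $J$ the claim gives $J^\star\subseteq(J+\f a)^\star\subsetneq A$, so any $z\in J^\star$ satisfies $((z)+\f a)^\star\subseteq(J+\f a)^\star\subsetneq A$, whence $z\in\f m_{\f a}$; thus $\f m_{\f a}^\star\subseteq\f m_{\f a}$. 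Finally, if $\f p$ is any $\star$-maximal ideal with $\f a\subseteq\f p$, then for every $x\in\f p$ we have $((x)+\f a)^\star\subseteq\f p^\star=\f p\subsetneq A$, so $\f p\subseteq\f m_{\f a}$; as $\f m_{\f a}$ is a proper $\star$-ideal and $\f p$ is maximal among these, $\f p=\f m_{\f a}$. Since $\f a$ lies in at least one $\star$-maximal ideal, this simultaneously shows that $\f m_{\f a}$ is $\star$-maximal and that it is the unique $\star$-maximal ideal containing $\f a$.

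I expect the only genuine subtlety to be the iteration step. The hypothesis ($\star$-c.a.) constrains only pairs of ideals, so before chaining it over the finitely many generators of $J$ one must verify that the class of proper $\star$-finite $\star$-ideals containing $\f a$ is closed under $(\f b,\f b')\mapsto(\f b+\f b')^\star$; the single non-formal ingredient in that verification is the $\star$-finiteness of $(\f b+\f b')^\star$, which rests on the identity $(\f b+\f b')^\star=(\f b^\star+\f b'^\star)^\star$ and the definition of $\star$-finiteness. Once this is in hand, the rest is bookkeeping with the monotonicity and idempotency of $\star$ and with the finite-type description of the $\star$-closure.
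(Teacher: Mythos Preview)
Your proof is correct and follows essentially the same approach as the paper's: show that $\f m_{\f a}$ is a proper $\star$-ideal containing every $\star$-maximal ideal over $\f a$, the key step being to iterate the ($\star$-c.a.) hypothesis over finitely many generators. The paper phrases the closure argument via the observation that $\f m_{\f a}$ is the union of the $\star$-maximal ideals containing $\f a$, and it glosses over the iteration of ($\star$-c.a.) that you spell out carefully; but the substance of the two arguments is the same.
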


\begin{proof}
It is clear that $\f m_{\f a}$ is the union of the set of the
$\star-$maximal ideals containing $\f a$. So, let $x, y \in \f m_{\f
a}$. Since $\f a $ satisfies the $\star$-c.a. property, $(((x) + \f
a) + ((y) + \f a))^\star$ is a proper $\star-$ideal, so it is
contained in a $\star-$maximal ideal $\f m \in V(\f a)$. Thus, $x,y
\in \f m$ and so $x - y \in \f \m \subseteq \f m_{\f a}$. That $ax
\in \f  m_{\f a}$, for all $a \in A$, $x \in \f m_{\f a}$ is
straightforward. So $ \f m_{\f a}$ is an ideal of $A$. Moreover, if
$\f b = (b_1, b_2, \ldots, b_n)$ is a finitely generated ideal
contained in $\f m_{\f a}$, we have that for each $i = 1, 2, \ldots,
n$, $b_i$ is contained in a $\star-$maximal ideal $\f m_i \in V(\f
a)$. So $((b_i) + \f a)^ \star \subsetneq A$. Since $\f a$ satisfies
the property $\star$-c.a., it follows that $(\f b + \f a)^\star
\subsetneq A$. Thus, $\f b \subseteq \f m$ for some $\m \in V(\f
a)$. So we have that $\f b^\star \subseteq \f m^\star = \f m
\subseteq \f m_{\f a}$, and so $\f m_{\f a}$ is a $\star-$ideal
(since $\star$ is of finite type).

So, it follows that $\f m_{\f a}$ is a $\star-$ideal containing all
the $\star-$maximal ideals that contain $\f a$, that is $\f m_{\f
a}$ is a $\star-$maximal ideal and it is the only one that contains
$\f a$.
\end{proof}

{ Let $A$ be an integral domain and $a$ be a nonzero element of $A$.
We denote by $(\Sigma')^\star_a$ the collection of all the
$\star$-comaximal families over $a$ of
 proper $\star-$finite $\star-$ideals of $A$.
We call $\Sigma^\star_a$ the collection of all the $\star$-comaximal
families  over $a$ of
 proper $\star-$finite $\star-$ideals of $A$ satisfying the property ($\star$-c.a.).}\\
 Obviously, we have the containment $\Sigma^\star_a \subseteq (\Sigma')^\star_a$.
\begin{prop}\label{starcaratterefinitezza}
Let $A$ be an integral domain, $\star$ a finite type star operation
on $A$ and $a\in A \backslash \{0\}$. Assume that there exists a
finite collection of ideals $\ms F\in\Sigma^\star_a\cap
\Max_\subseteq((\Sigma ')^\star_a)$. Then $V(a)\cap \sMax(A)=\{\f
m_{\f a}:\f a\in \ms F\}$ ($\f m_{\f a}$ as in Lemma \ref{uno}),
and, in particular, it is finite.
\end{prop}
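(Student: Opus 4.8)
The plan is to exploit the maximality of $\ms F$ inside $(\Sigma')^\star_a$ to force each member to be a "localizing" ideal, then invoke Lemma \ref{uno} to produce the unique $\star$-maximal ideal attached to it. First I would record the easy inclusion $\{\f m_{\f a} : \f a\in\ms F\}\subseteq V(a)\cap\sMax(A)$: each $\f a\in\ms F$ contains $a$ (since $\ms F\in\Sigma^\star_a$) and satisfies ($\star$-c.a.), so Lemma \ref{uno} applies verbatim, giving a $\star$-maximal ideal $\f m_{\f a}\supseteq \f a\ni a$. So the content is the reverse inclusion: every $\f m\in V(a)\cap\sMax(A)$ equals $\f m_{\f a}$ for some $\f a\in\ms F$.

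The key step is this. Fix $\f m\in V(a)\cap\sMax(A)$. I claim $\f m\supseteq\f a$ for some $\f a\in\ms F$; granting this, since $\f m_{\f a}$ is the \emph{only} $\star$-maximal ideal containing $\f a$ (Lemma \ref{uno}) and $\f m$ is a $\star$-maximal ideal containing $\f a$, we get $\f m=\f m_{\f a}$, and we are done (finiteness is then immediate from finiteness of $\ms F$). Suppose, for contradiction, that $\f m$ contains no member of $\ms F$. Then for each $\f a\in\ms F$ there is an element $x_{\f a}\in\f a\setminus\f m$; equivalently $(x_{\f a})+\f m$ is not contained in any proper $\star$-ideal above... rather, more usefully, $\f a+\f m$ has $\star$-closure equal to $A$ because $\f m$ is $\star$-maximal and $\f a\not\subseteq\f m$. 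The idea is now to \emph{enlarge} $\ms F$ by throwing in a suitable $\star$-finite $\star$-ideal $\f b$ with $a\in\f b\subseteq\f m$ (for instance $\f b=((a)+(c))^v$-type construction, or just $\f b$ built from finitely many generators of $\f m$ together with $a$, taking the $t$- or $\star$-closure) so that $\ms F\cup\{\f b\}$ is still $\star$-comaximal over $a$: one needs $(\f a+\f b)^\star=A$ for every $\f a\in\ms F$, which holds since $\f b\subseteq\f m$ while $\f a\not\subseteq\f m$ forces $(\f a+\f b)^\star\supseteq$ something not in $\f m$, hence $=A$ by $\star$-maximality of $\f m$ — here one must be slightly careful, using that $\f a$ is $\star$-finite so that the sum of finitely generated pieces already escapes $\f m$. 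This contradicts $\ms F\in\Max_\subseteq((\Sigma')^\star_a)$, unless $\f b$ already lies in $\ms F$ — but $\f b\subseteq\f m$ and no member of $\ms F$ sits inside $\f m$, contradiction. Hence $\f m$ does contain some $\f a\in\ms F$.

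The main obstacle I anticipate is producing the enlarging ideal $\f b$ correctly: it must be a \emph{proper} $\star$-finite $\star$-ideal containing $a$, it must be genuinely new (not already in $\ms F$), and adding it must preserve pairwise $\star$-comaximality with every old member. Properness is fine since $\f b\subseteq\f m\subsetneq A$; $\star$-finiteness is arranged by choosing finitely many generators; the comaximality check is the delicate part and is where the $\star$-finiteness of the members of $\ms F$ is essential, since $\star$ need not commute with infinite sums. A secondary subtlety is ruling out $\f b\in\ms F$: this is automatic from "no member of $\ms F$ is contained in $\f m$," but one should state it explicitly. Once the enlargement is shown to live in $(\Sigma')^\star_a$ and properly contain $\ms F$, maximality is contradicted and the proof closes.
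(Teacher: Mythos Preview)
Your overall architecture matches the paper's: establish the easy inclusion via Lemma~\ref{uno}, then suppose some $\f m\in V(a)\cap\sMax(A)$ is not among the $\f m_{\f a}$'s (equivalently, as you phrase it, $\f m$ contains no member of $\ms F$), and enlarge $\ms F$ by a new proper $\star$-finite $\star$-ideal inside $\f m$ to contradict maximality in $(\Sigma')^\star_a$.

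However, your verification of $\star$-comaximality of the new ideal $\f b$ with each $\f a\in\ms F$ has a genuine gap. You argue that since $\f a\not\subseteq\f m$, the ideal $(\f a+\f b)^\star$ ``contains something not in $\f m$, hence $=A$ by $\star$-maximality of $\f m$.'' This is false: containing an element outside $\f m$ only shows $(\f a+\f b)^\star\not\subseteq\f m$; it says nothing about containment in \emph{other} $\star$-maximal ideals. The $\star$-finiteness of $\f a$ does not repair this, and your appeal to it is misplaced. One can salvage your route by using that $\star$ is of finite type: from $(\f a+\f m)^\star=A$ extract, for each $\f a\in\ms F$, a finitely generated $\f m_{\f a}'\subseteq\f m$ with $(\f a+\f m_{\f a}')^\star=A$, and then take $\f b$ to be the $\star$-closure of $(a)$ together with all these finitely many generators. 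But this is not what you wrote.

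The paper avoids this difficulty cleanly. Since $\f m\neq\f m_{\f a_i}$ for all $i$ and all are $\star$-maximal, Prime Avoidance yields $x\in\f m\setminus\bigcup_i\f m_{\f a_i}$. Now $x\notin\f m_{\f a_i}$ means, \emph{by the very definition} of $\f m_{\f a_i}$ in Lemma~\ref{uno}, that $((x)+\f a_i)^\star=A$. Hence $\f a:=(x,a)^\star\subseteq\f m$ is a proper $\star$-finite $\star$-ideal with $(\f a+\f a_i)^\star=A$ for every $i$, and $\f a\notin\ms F$ since $x\in\f a\setminus\f m_{\f a_i}$ while $\f a_i\subseteq\f m_{\f a_i}$. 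This gives the enlargement immediately, with no delicate extraction argument.
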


\begin{proof} Let $\ms F:=\{\f a_1,\z, \f a_n\}$. By Lemma \ref{uno}, { the inclusion} $V(a)\cap \sMax(A)\supseteq\{\f m_{\f a_1},\z,\f
m_{\f a_n}\}$ is immediate. Conversely, assume, by contradiction,
that there exists a $\star-$maximal ideal $\f m$ containing $a$ such
that $\f m\neq \f m_{\f a_1},\z,\f m_{\f a_n}$. In particular, we
have $\f m\nsubseteq \f m_{\f a_i}$, for each $i\in\{1,\z,n\}$, and
thus, { by the Prime Avoidance Lemma}, we can pick an element ${
x}\in \f m$ such that $((x)+\f a_i)^\star=A$, for each $i=1,\z, n$.
Now, set $\f a:=(x,a)^\star$. It is clear that $\f a$ is a proper
$\star-$finite $\star-$ideal of $A$ (since $\f a\subseteq \f m$).
For every $i=1,\z, n$, we have $A=((x)+\f a_i)^\star\subseteq (\f
a+\f a_i)^\star$, and hence $\widehat{\ms F}:=\ms F\cup\{\f a\}$ is
a $\star$-comaximal collection over $a$ of proper $\star-$finite
$\star-$ideals of $A$. Moreover $\f a\neq \f a_i$, for each $i=1,\z,
n$, since $x\in \f a\setminus \f m_{\f a_i}$ and $\f a_i\subseteq \f
m_{\f a_i}$. Thus we have $\ms F\subsetneq \widehat{\ms F}$, a
contradiction, by virtue of the maximality of $\ms F$ {  in $(\Sigma
')^\star_a$}.
\end{proof}

The following remark allows us to get some sufficient condition
about the existence of a collection of ideals as stated in
Proposition \ref{starcaratterefinitezza}.
\begin{prop}\label{condizionesufficientemax}
Let $A$ be an integral domain, $\star$ a star operation on $A$ and
$a\in A \backslash \{0\}$. Assume that, for every ideal $\f a\in
\mathcal I^\star_{\rm fin}(A)$ containing $a$, there exists an ideal
$\f b\in \mathcal I^\star_{\rm fin}(A)$ { with the property
($\star$-c.a.)} containing $\f a$. Then, the following statements
hold.

\begin{enumerate}[\rm(i)]
\item The family $\Sigma^\star_a$, partially ordered by the inclusion $\subseteq$, has maximal
elements.
\item Every maximal element of $\Sigma^\star_a$ is a maximal element of
$(\Sigma')^\star_a$ (partially ordered by the same inclusion
$\subseteq$).

\item There exists a maximal element of $((\Sigma')^\star_a, \subseteq)$ which is an element of $\Sigma^\star_a$.
\end{enumerate}
\end{prop}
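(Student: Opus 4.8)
The plan is to prove the three assertions in order, using a Zorn's lemma argument for (i) and then deducing (ii) and (iii) by relatively soft arguments. For (i), I would consider a chain $\{\ms F_\lambda\}_{\lambda\in\La}$ in $\Sigma^\star_a$ and show that $\ms F:=\bigcup_\lambda \ms F_\lambda$ is again an element of $\Sigma^\star_a$. That $\ms F$ is a $\star$-comaximal family over $a$ of proper $\star$-finite $\star$-ideals is immediate, since $a\in\bigcap\ms F_\lambda$ for all $\lambda$ forces $a\in\bigcap\ms F$, and any two distinct members $\f a_1,\f a_2\in\ms F$ already lie in a common $\ms F_\lambda$ (chain condition), so $(\f a_1+\f a_2)^\star=A$. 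The delicate point is to check that every member of $\ms F$ still satisfies ($\star$-c.a.); but this is a property of an individual ideal, not of the family, so it is inherited automatically from membership in some $\ms F_\lambda$. Hence $\ms F\in\Sigma^\star_a$ is an upper bound, and Zorn's lemma applies — \emph{provided} $\Sigma^\star_a$ is nonempty, which is exactly what the hypothesis guarantees: applying it to $\f a=(a)^\star$ produces an ideal $\f b\in\mathcal I^\star_{\rm fin}(A)$ with ($\star$-c.a.) containing $(a)^\star$, and then the singleton $\{\f b\}$ lies in $\Sigma^\star_a$ (a one-element family is trivially $\star$-comaximal over $a$).

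For (ii), let $\ms F$ be a maximal element of $\Sigma^\star_a$; I want to show it is maximal in $(\Sigma')^\star_a$. Suppose $\ms G\in(\Sigma')^\star_a$ with $\ms F\subseteq\ms G$, and pick any $\f a\in\ms G$. Since $\f a$ is a proper $\star$-finite $\star$-ideal containing $a$, the hypothesis yields a proper $\star$-finite $\star$-ideal $\f b\supseteq\f a$ satisfying ($\star$-c.a.). The key observation is that $\ms F\cup\{\f b\}$ should still be $\star$-comaximal over $a$: for each $\f c\in\ms F\subseteq\ms G$ distinct from $\f a$ we have $(\f a+\f c)^\star=A$, hence $A=(\f a+\f c)^\star\subseteq(\f b+\f c)^\star$; and $\f b$ satisfies ($\star$-c.a.), so $\ms F\cup\{\f b\}\in\Sigma^\star_a$. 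By maximality of $\ms F$ in $\Sigma^\star_a$, this forces $\f b\in\ms F$. But then, using that $\f b$ satisfies ($\star$-c.a.) together with the comaximality relations, one shows $\f a$ itself must already be comaximal-compatible with $\ms F$, and in fact $\f a\in\ms F$: indeed $\f a\subseteq\f b$ and for every $\f c\in\ms F\setminus\{\f a\}$ one has $(\f a+\f c)^\star=A$, while $\f b\in\ms F$ is comaximal with all other members, forcing $\f a=\f b$ (otherwise $\f b$ and $\f a$ would be two distinct non-comaximal members of $\ms G$, contradicting $\ms G\in(\Sigma')^\star_a$ once we note $\f a\subseteq\f b$ gives $(\f a+\f b)^\star=\f b\ne A$). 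Hence $\ms G=\ms F$. I expect this matching-up of $\f a$ with $\f b$ to be the main obstacle: one has to be careful that replacing $\f a$ by the larger ($\star$-c.a.) ideal $\f b$ does not silently drop $\f a$ or create a strictly larger family.

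Finally, (iii) is immediate from (i) and (ii): by (i) there exists a maximal element $\ms F$ of $\Sigma^\star_a$, and by (ii) this same $\ms F$ is a maximal element of $(\Sigma')^\star_a$; since $\ms F\in\Sigma^\star_a$ by construction, it is the desired element. This is exactly the hypothesis needed to invoke Proposition \ref{starcaratterefinitezza}, so no further work is required here.
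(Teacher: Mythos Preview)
Your approach is essentially the paper's: Zorn's Lemma for (i), an enlargement-by-$\f b$ argument for (ii), and (iii) as the conjunction. There is one small oversight in (ii) worth flagging. You pick an \emph{arbitrary} $\f a\in\ms G$ and then claim $\ms F\cup\{\f b\}$ is $\star$-comaximal, but your verification only handles pairs $(\f b,\f c)$ with $\f c\in\ms F$ and $\f c\neq\f a$. If it happens that $\f a\in\ms F$ and the hypothesis hands you some $\f b\supsetneq\f a$, then $\f a$ and $\f b$ are distinct members of $\ms F\cup\{\f b\}$ with $(\f a+\f b)^\star=\f b\neq A$, so the family is not $\star$-comaximal and the step ``maximality forces $\f b\in\ms F$'' is not justified.

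The fix is immediate and is exactly what the paper does: since the goal is $\ms G\subseteq\ms F$, dispose of the trivial case $\f a\in\ms F$ first and assume $\f a\in\ms G\setminus\ms F$. Then every $\f c\in\ms F$ is automatically distinct from $\f a$, your comaximality check covers all needed pairs, and the rest of your argument (in particular the neat observation that $\f b\in\ms F\subseteq\ms G$ together with $\f a\subseteq\f b$ forces $\f a=\f b$ inside the $\star$-comaximal family $\ms G$) goes through unchanged. The paper phrases (ii) as a proof by contradiction starting from $\f a\in\ms H\setminus\ms G$, which is the same restriction.
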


\begin{proof} (i) By assumption, there exists a proper $\star-$finite
$\star-$ideal $\f b$ { with the property ($\star$-c.a.)} containing
$a$ . Then $\{\f b\}\in \Sigma^\star_a$. Now, let $\{\ms
G_\lambda:\lambda\in \Lambda\}\subseteq \Sigma^\star_a$ be a chain
and set $\ms G:=\bigcup_{\lambda \in \Lambda}\ms G_{\lambda}$.
Clearly, { every ideal in $\ms G$ has the property ($\star$-c.a.)}.
Now, let $\f a_1,\f a_2\in \ms G$. Then, pick elements
$\lambda_1,\lambda_2\in\Lambda$ such that $\f a_i\in \ms
G_{\lambda_i}$ ($i=1,2$). We can assume, without loss of generality,
that $\ms G_{\lambda_1}\subseteq \ms G_{\lambda_2}$. Then, since
$\ms G_{\lambda_2}$ is a $\star$-comaximal collection over $a$, we
have $(\f a_1+\f a_2)^\star=A$. Hence $\ms G\in \Sigma^\star_a$ and
thus it is an upper bound of the chain $\{\ms
G_\lambda:\lambda\in\Lambda\}$. Now, statement (i) follows
immediately by Zorn's Lemma.

(ii) Let $\ms G$ be a maximal element of $\Sigma^\star_a$. Assume,
by contradiction, that there exists a collection $\ms H \in
(\Sigma')^\star_a$
 such that $\ms G\subsetneq \ms H$, and let $\f a\in \ms H\setminus
\ms G$. Since $\ms G\cup\{\f a\}\notin\Sigma^\star_a$ is clearly a
$\star$-comaximal collection over $a$ of proper $\star-$finite
$\star-$ideals, {\rm it follows that the ideal $\f a$ has not the
property ($\star$-c.a.)}. By assumption, { we can pick an ideal $\f
b\in \mathcal I^\star_{\rm fin}(A)$ containing $\f a$ such that $\f
b$ satisfies the property ($\star$-c.a.)}. Then $\f b \notin \ms G$,
otherwise $\f b \in \ms H$, but $\f b$ is not $\star$-comaximal with
$\f a$, and so this would be a contradiction. Thus, $\ms G\subsetneq
\widehat{\ms G}:=\ms G\cup\{\f b\}$. By construction, { each ideal
of $\widehat{\ms G}$ satisfies the property ($\star$-c.a.)}.
Moreover, every element of $\widehat{\ms G}$ is different by $\f a$,
and hence, since $\ms H$ is a $\star$-comaximal collection over $
a$, we have $A=(\f c+\f a)^\star\subseteq (\f c+\f b)^\star$. This
shows that $\widehat{\ms G}\in \Sigma^\star_a$, a contradiction, by
the maximality of $\ms G$. Thus (ii) is proved. Statements (iii)
follows immediately by (i) and (ii). \end{proof}
\begin{prop}\label{fin_imply_add}
Let $A$ be an integral domain, $\star $ a star operation on $A$ and
$a\in A \backslash \{0\}$. Assume that every family $\gg \in
(\Sigma')^\star_a$ is finite. Then, { for each $\f a\in \mathcal
I^\star_{\rm fin}(A)$, with $a\in \f a$, there exists an ideal  $\f
b\in \mathcal I^\star_{\rm fin}(A)$ containing $\f a$ such that $\f
b$ satisfies the condition ($\star$-c.a.).}
\end{prop}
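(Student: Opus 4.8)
The plan is to argue by contradiction, turning a persistent failure of the property ($\star$-c.a.) into an infinite $\star$-comaximal family over $a$, which the hypothesis forbids. Fix $\f a\in\mathcal I^\star_{\rm fin}(A)$ with $a\in\f a$. If $\f a=A$ there is nothing to prove (take $\f b:=\f a$, which satisfies ($\star$-c.a.) vacuously, there being no proper $\star$-finite $\star$-ideal containing it), so I may assume $\f a$ is proper. Suppose, for contradiction, that no proper ideal $\f b\in\mathcal I^\star_{\rm fin}(A)$ containing $\f a$ satisfies ($\star$-c.a.).

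Under this assumption I would construct recursively (using dependent choice) an increasing sequence $\f b_0\subseteq\f b_1\subseteq\f b_2\subseteq\cdots$ of proper $\star$-finite $\star$-ideals containing $\f a$, together with auxiliary ideals $\f d_0,\f d_1,\f d_2,\ldots$, as follows. Put $\f b_0:=\f a$. Given a proper $\star$-finite $\star$-ideal $\f b_n$ with $\f a\subseteq\f b_n$, the contradiction hypothesis says that $\f b_n$ fails ($\star$-c.a.), so there are proper $\star$-finite $\star$-ideals $\f c_n,\f d_n$, both containing $\f b_n$, with $(\f c_n+\f d_n)^\star=A$; set $\f b_{n+1}:=\f c_n$. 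Then $\f b_{n+1}=\f c_n\supseteq\f b_n\supseteq\f a\ni a$ is again a proper $\star$-finite $\star$-ideal containing $\f a$, so the recursion continues, and by construction $\f b_n\subseteq\f b_{n+1}$ for every $n$.

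Now I would check that $\gg:=\{\f d_n:n\geq 0\}$ is an infinite member of $(\Sigma')^\star_a$. Each $\f d_n$ is a proper $\star$-finite $\star$-ideal, and $a\in\f b_0\subseteq\f b_n\subseteq\f d_n$, so $a\in\bigcap\gg$. For indices $n<m$ we have $\f d_m\supseteq\f b_m\supseteq\f b_{n+1}=\f c_n$, hence $(\f d_n+\f d_m)^\star\supseteq(\f c_n+\f d_n)^\star=A$, i.e.\ $(\f d_n+\f d_m)^\star=A$; since $\f d_n$ is a proper $\star$-ideal this also forces $\f d_n\neq\f d_m$. Thus $\gg$ is a $\star$-comaximal family over $a$ of proper $\star$-finite $\star$-ideals with infinitely many pairwise distinct members, contradicting the hypothesis that every $\gg\in(\Sigma')^\star_a$ is finite. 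Therefore some proper $\f b\in\mathcal I^\star_{\rm fin}(A)$ with $\f a\subseteq\f b$ does satisfy ($\star$-c.a.), as asserted.

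The one point that genuinely needs care — and which dictates the shape of the recursion — is keeping the $\f d_n$ pairwise $\star$-comaximal, hence distinct: this is exactly why at each stage one must proceed with $\f b_{n+1}:=\f c_n$, so that every later $\f d_m$ (with $m>n$) still contains the ``discarded half'' $\f c_n$ of step $n$. No finite-type hypothesis on $\star$ is used, only the negation of ($\star$-c.a.) and dependent choice. An equivalent, perhaps more visual, organization would build a full binary tree $(\f a_s)_s$ indexed by finite $\{0,1\}$-strings, with $\f a_{s0},\f a_{s1}\supseteq\f a_s$ and $(\f a_{s0}+\f a_{s1})^\star=A$, and then extract the infinite antichain $\{\f a_{0^{n}1}:n\geq 0\}$; the sequence above is simply the branch $s=0^{n}$ of such a tree.
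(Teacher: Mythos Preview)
Your proof is correct and follows essentially the same approach as the paper: both argue by contradiction, recursively split each failing $\star$-finite $\star$-ideal into a $\star$-comaximal pair, continue along one half, and collect the other halves into an infinite $\star$-comaximal family over $a$. Your $\f b_n$, $\f c_n$, $\f d_n$ correspond to the paper's $\f a_1^{n}$, $\f a_1^{n+1}$, $\f a_2^{n+1}$; your write-up is in fact a bit cleaner in handling the trivial case $\f a=A$ and in verifying that the $\f d_n$ are pairwise distinct.
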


\begin{proof} Assume, by a way of contradiction, that the statement
is false. Then, in particular, { $\f a$ does not satisfy the
property ($\star$-c.a.)}. Hence, there exist proper ideals $\f
a_1^1,\f a_2^1\in \mathcal I^\star_{\rm fin}(A)$ containing $\f a$
such that $(\f a_1^1+\f a_2^1)^\star =A$. For each $i\geq 2$, by an
easy induction argument, we can pick { proper $\star-$finite
$\star-$ideals
 $\f a_1^i,\f a_2^i$ containing $\f a_1^{i-1}$} such that $(\f a_1^i+\f a_2^i)^\star=A$. Now, fix $i\geq 2 $ and $j<i$. Keeping in mind
that $$\f a_2^j\supseteq \f a_1^{j-1}\supseteq \f a_1^i,$$ it
follows immediately that $(\f a_2^i+\f a_2^j)^\star=A$. This proves
that $\ms F:=\{\f a_2^i:i\geq 1\}$ is a $\star$-comaximal collection
of $\star-$finite $\star-$ideals. Moreover, $\ms F$ is infinite,
since each element of $\ms  F$ is a proper $\star-$ideal. This gets
a contradiction.\end{proof}

\begin{prop}\label{caratterizzazione_star_carattere_finito}
Let $A$ be an integral domain and $\star$ a finite type star
operation on $A$. Then the following conditions are equivalent.

\begin{enumerate}[(i)]
\item $A$ has the $\star-$finite character.
\item For each nonzero element $a\in A$, every family $\gg \in (\Sigma')^\star_a$ is finite.
\end{enumerate}
\end{prop}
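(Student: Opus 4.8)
The plan is to prove the two implications separately, with the forward direction being essentially routine and the reverse direction being where the machinery of the preceding propositions earns its keep.

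\emph{(i) $\Rightarrow$ (ii).} Suppose $A$ has the $\star$-finite character and fix a nonzero $a \in A$. Let $\ms G \in (\Sigma')^\star_a$. Every member $\f b \in \ms G$ is a proper $\star$-finite $\star$-ideal containing $a$, hence is contained in at least one $\star$-maximal ideal, and that $\star$-maximal ideal necessarily contains $a$, so it lies in the finite set $V(a) \cap \sMax(A)$. The key point is that distinct members of $\ms G$ cannot be contained in a common $\star$-maximal ideal: if $\f b_1 \neq \f b_2$ both sit inside some $\f m \in \sMax(A)$, then $(\f b_1 + \f b_2)^\star \subseteq \f m^\star = \f m \subsetneq A$, contradicting the $\star$-comaximality defining $(\Sigma')^\star_a$. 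Thus the assignment sending each $\f b \in \ms G$ to a $\star$-maximal ideal containing it is injective into the finite set $V(a) \cap \sMax(A)$, so $\ms G$ is finite.

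\emph{(ii) $\Rightarrow$ (i).} Fix a nonzero $a \in A$; we must show $V(a) \cap \sMax(A)$ is finite. By hypothesis, every family in $(\Sigma')^\star_a$ is finite, so Proposition \ref{fin_imply_add} applies: for each $\f a \in \mathcal I^\star_{\rm fin}(A)$ with $a \in \f a$ there is a $\star$-finite $\star$-ideal $\f b \supseteq \f a$ satisfying ($\star$-c.a.). This is exactly the hypothesis of Proposition \ref{condizionesufficientemax}, whose conclusion (iii) gives a family $\ms F$ that is simultaneously a maximal element of $((\Sigma')^\star_a, \subseteq)$ and an element of $\Sigma^\star_a$; moreover $\ms F$ is finite by (ii). Hence $\ms F \in \Sigma^\star_a \cap \Max_\subseteq((\Sigma')^\star_a)$ is a finite collection, and Proposition \ref{starcaratterefinitezza} yields $V(a) \cap \sMax(A) = \{\f m_{\f a} : \f a \in \ms F\}$, a finite set. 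Since $a$ was arbitrary (and every $\star$-ideal is contained in some $\star$-maximal ideal, hence, taking any nonzero element, reduces to the element case), $A$ has the $\star$-finite character.

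The main obstacle, if any, is purely bookkeeping: one must check at the start of (ii) $\Rightarrow$ (i) that $\mathcal I^\star_{\rm fin}(A)$ contains \emph{some} proper ideal over $a$ (namely $(a)^\star$, provided $(a)^\star \neq A$, i.e.\ $a$ is a nonunit — the unit case being trivial since then no $\star$-maximal ideal contains $a$), so that Proposition \ref{condizionesufficientemax} is not vacuous. Everything else is a direct chaining of the four preceding propositions, and no new argument is needed.
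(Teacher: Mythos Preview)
Your proof is correct and follows essentially the same approach as the paper: the forward direction uses the injection of $\ms G$ into $V(a)\cap\sMax(A)$ via choice of a $\star$-maximal ideal over each member, and the reverse direction chains Propositions \ref{fin_imply_add}, \ref{condizionesufficientemax}(iii), and \ref{starcaratterefinitezza} exactly as the paper does. Your added remark on the unit case is a small extra check the paper leaves implicit.
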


\begin{proof} (i) $\Rightarrow$ (ii) Fix a nonzero element $a\in A$ and let $\ms F$ be
an element of $(\Sigma')^\star_a$. For  every $\f a\in \ms F$, we
can pick a $\star-$maximal $\star-$ideal $\f m(\f a)$ containing $\f
a$ (and $a$, in particular). Since  $\ms F$ is a $\star$-comaximal
collection, for distinct ideals $\f a,\f b\in \ms F$, we have that
$\f m(\f a)\neq \f m(\f b)$. Thus the cardinality of $\ms F$ is less
 than or equal to  that of $\sMax(A)\cap V(a)$. Then (ii) follows
by assumption.

(ii) $\Rightarrow$ (i) Fix a nonzero element $a\in A$. By assumption
and Proposition \ref{fin_imply_add}, we can apply Proposition
\ref{condizionesufficientemax} (iii), and thus there exists a
 collection { of ideals $\ms F\in \Max_{\subseteq}((\Sigma')^\star_a)\cap \Sigma^\star_a$.} Then the conclusion follows immediately by Proposition
\ref{starcaratterefinitezza}.\end{proof}

%

Let $A$ be an integral domain and $\mathcal A$  a family of
overrings of $A$ such that $A = \bigcap\mathcal A$. For each $B\in
\mathcal A$, let $\star_B$ be a star operation on $B$.

Recall by \cite[Theorem 2]{a}, that $$I \mapsto I^{\wedge_{_{B\in
\mathcal A}} \star_B}: = \bigcap_{B\in\mathcal A} (IB)^{\star_B}$$
is a star operation on $A$. Moreover, if each $\star_B$ is of finite
type and the intersection $\bigcap\mathcal A$ is locally finite
(i.e., each element $d \in \bigcap\mathcal A$ is a unit in all
except a finite number of domains in $\mathcal A$), then
$\wedge_{_{B\in\mathcal A}} \star_B$ is of finite type.

\begin{prop} \label{locstarfin}
Let $A$ be an integral domain and $\Delta  \subseteq \sss(A)$.
Assume that $A = \bigcap_{\p \in \Delta} A_{\p}$ is a locally finite
intersection. For each $\p \in \Delta$, let $\star_\p$ be a finite
type star operation on $A_{\p}$  and set $\star := \wedge_{\p \in
\Delta} \star_\p$. If $\f a$ is an ideal of $A$ such that $\f
aA_{\p}$ is $\star_\p$-finite for each $\p$, then $\f a$ is
$\star-$finite.
\end{prop}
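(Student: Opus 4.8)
The plan is to use one nonzero element of $\f a$ together with the local finiteness of the intersection to build a single finitely generated subideal $\f c\subseteq\f a$ with $\f c^\star=\f a^\star$, which is exactly what $\star$-finiteness of $\f a$ demands. To begin, fix a nonzero $d\in\f a$. Since $A=\bigcap_{\p\in\Delta}A_\p$ is a locally finite intersection, $d$ is a unit in $A_\p$ for all $\p$ outside some finite subset $\{\p_1,\dots,\p_k\}$ of $\Delta$; for each such $\p$ we have $\f aA_\p=A_\p$, hence $(\f aA_\p)^{\star_\p}=A_\p=((d)A_\p)^{\star_\p}$, so the principal ideal $(d)$ already accounts for the correct local closure at all but finitely many primes.

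It remains to treat $\p_1,\dots,\p_k$. For each $j$, the hypothesis gives that $\f aA_{\p_j}$ is $\star_{\p_j}$-finite; as $\star_{\p_j}$ is of finite type, there is a finitely generated ideal $\f b_j\subseteq\f aA_{\p_j}$ with $(\f b_j)^{\star_{\p_j}}=(\f aA_{\p_j})^{\star_{\p_j}}$. Each generator of $\f b_j$ can be written as $a/s$ with $a\in\f a$ and $s\notin\p_j$; since the $s$'s are units in $A_{\p_j}$, clearing denominators lets me replace $\f b_j$ by the finitely generated ideal of $A$ generated by the corresponding numerators, so that now $\f b_j\subseteq\f a$ while still $(\f b_jA_{\p_j})^{\star_{\p_j}}=(\f aA_{\p_j})^{\star_{\p_j}}$.

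Now set $\f c:=(d)+\f b_1+\cdots+\f b_k$, a finitely generated ideal with $\f c\subseteq\f a$, and compare $\f cA_\p$ with $\f aA_\p$ for each $\p\in\Delta$: if $\p=\p_j$ then $\f b_jA_\p\subseteq\f cA_\p\subseteq\f aA_\p$ forces $(\f cA_\p)^{\star_\p}=(\f aA_\p)^{\star_\p}$, and if $\p\notin\{\p_1,\dots,\p_k\}$ then $d$ is a unit in $A_\p$, whence $\f cA_\p=A_\p=\f aA_\p$ and the $\star_\p$-closures again coincide. Plugging this into the defining formula $I^\star=\bigcap_{\p\in\Delta}(IA_\p)^{\star_\p}$ yields $\f c^\star=\f a^\star$, so $\f a$ is $\star$-finite. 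I expect the only point needing care to be the denominator-clearing step, which is what allows the locally chosen finitely generated ideals to be taken inside $\f a$ itself; everything else is a routine localize-and-compare argument powered by the local finiteness hypothesis.
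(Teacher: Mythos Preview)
Your proof is correct and follows essentially the same approach as the paper: pick a nonzero element of $\f a$, use local finiteness to reduce to finitely many primes, pull back finitely generated witnesses from those localizations into $\f a$, and assemble them into a single finitely generated ideal with the same $\star$-closure. The only difference is cosmetic---you spell out the denominator-clearing step that the paper leaves implicit when it asserts one may take the finitely generated $\f a_i$ inside $\f a$.
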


\begin{proof}
As usual, take an $x \in \f a$. Let $\p_1, \p_2, \ldots, \p_n$ the
only primes in $\Delta$ containing $x$. So, if $\p \neq \p_1,
\ldots, \p_n$, $\f aA_{\p} = xA_{\p} = A_{\p}$. Since the
$\star_\p$'s are of finite type, for each $i = 1, \ldots, n$, we can
find a finitely generated ideal  $\f a_i$ of $A$, included in $\f
a$, such that
 $(\f a A_{\p})^{\star_\p} = (\f a_i A_{\p})^{\star_\p}$. Let $\f b = (x) + \f a_1 + \ldots + \f a_n$.
Obviously $\f b$ is a finitely generated integral ideal of $A$
contained in $\f a$.

If $\p \neq \p_1 \ldots \p_n$, $\f bA_{\p} \supseteq xA_{\p} =
A_{\p}$ and so $(\f bA_{\p})^{\star_\p} = (A_{\p})^{\star_\p} = (\f
aA_{\p})^{\star_\p}$. Moreover, for $i = 1, 2, \ldots, n$, $(\f
aA_{\p_i})^{\star_{\p_i}} = (\f a_i A_{\p_i})^{\star_{\p_i}}
\subseteq (\f bA_{\p_i})^{\star_{\p_i}} \subseteq (\f a
A_{\p_i})^{\star_{\p_i}} $.

So, for each $\p \in \Delta$, $(\f aA_{\p})^{\star_\p} = (\f b
A_{\p})^{\star_\p}$. It follows that $\f a^\star = \f b^\star$ and
$\f a$ is $\star-$finite.
\end{proof}

We will denote by $t_\p$ the $t$-operation of $A_{\p}$. Recall  that
B.G. Kang proved in  \cite[Lemma 3.4(3)]{k}  that, for each ideal
$\f a$, $(\f aA_{\p})^{t_\p} = (\f a^tA_{\p})^{t_\p}$.

\begin{prop}\label{twedge}
Let $A$ be an integral domain, $\Delta \subseteq \sss(A)$ such that
$A = \bigcap_{\p \in \Delta}A_{\p}$ is a locally finite
intersection. Then $t= \wedge_{\p \in \Delta} t_\p$.
\end{prop}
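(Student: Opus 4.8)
The plan is to show that the star operations $t$ and $\star:=\wedge_{\p\in\Delta}t_\p$ agree on every integral ideal of $A$; this suffices, since both are star operations and hence commute with multiplication by a nonzero element of $K$, so equality on integral ideals forces equality on all of $F(A)$. First I would record that $\star$ is a well-defined \emph{finite type} star operation on $A$: it is a star operation by \cite[Theorem 2]{a}, because $A=\bigcap_{\p\in\Delta}A_\p$; and it is of finite type because each $t_\p$ is of finite type and the intersection $\bigcap_{\p\in\Delta}A_\p$ is locally finite, exactly as recalled in the paragraph preceding the construction of $\wedge_{B\in\mathcal A}\star_B$.

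The inclusion $I^\star\subseteq I^t$ for every $I\in F(A)$ is then immediate: the $t$-operation is the maximal finite type star operation on $A$, so $I^{\star'}\subseteq I^t$ for \emph{any} finite type star operation $\star'$, in particular for $\star'=\star$.

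For the reverse inclusion, fix an ideal $I$ of $A$ and a prime $\p\in\Delta$. By B.G. Kang's identity \cite[Lemma 3.4(3)]{k} we have $(IA_\p)^{t_\p}=(I^tA_\p)^{t_\p}$, and the right-hand side contains $I^tA_\p$, hence contains $I^t$. Therefore $I^t\subseteq (IA_\p)^{t_\p}$ for every $\p\in\Delta$, and intersecting over $\Delta$ gives $I^t\subseteq\bigcap_{\p\in\Delta}(IA_\p)^{t_\p}=I^\star$. Combining the two inclusions yields $I^t=I^\star$ for all $I$, that is, $t=\wedge_{\p\in\Delta}t_\p$.

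There is no real obstacle once these ingredients are assembled; the one point that genuinely requires attention is the appeal to the "finite type" half of the $\wedge$-construction, which is where the locally finite hypothesis is used — without it $\wedge_{\p\in\Delta}t_\p$ need not be of finite type, and the maximality property of $t$ could not be invoked to get $I^\star\subseteq I^t$. Everything else is a direct consequence of Kang's identity together with the maximality of the $t$-operation.
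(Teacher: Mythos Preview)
Your proof is correct and follows essentially the same approach as the paper: both arguments establish that $\star:=\wedge_{\p\in\Delta}t_\p$ is of finite type (via \cite[Theorem 2]{a} and the locally finite hypothesis), invoke the maximality of $t$ among finite type star operations to get $\star\leq t$, and then use Kang's identity $(IA_\p)^{t_\p}=(I^tA_\p)^{t_\p}$ together with $I^t\subseteq I^tA_\p$ to obtain $t\leq\star$.
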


\begin{proof}
Since the intersection $A= \bigcap_{\p \in \Delta} A_{\p}$ is
locally finite and $t_\p$ is of finite type for each $\p\in\Delta$,
by \cite[Theorem 2]{a} it follows that $\star := \wedge_{\p \in
\Delta} t_\p$ is a finite type star operation on $A$. Thus $\star
\leq t$. Conversely, for an ideal $\f a$, we have that $\f a^t
\subseteq \f a^t A_{\p}$ for each $\p \in \Delta$, so $\f a^t
\subseteq \bigcap_\p \f a^t A_{\p} \subseteq \bigcap_\p (\f a^t
A_{\p})^{t_\p} =  \bigcap_\p (\f a A_{\p})^{t_\p} = \f a^\star$ (the
first equality follows by Kang's result). Thus $t \leq \star$. Hence
$t = \star$.
\end{proof}

\begin{thm}\label{t-locally t-finite}
Let $A$ be an integral domain with the $t$-finite character. Then
each $t$-locally $t$-finite $t$-ideal is $t$-finite.
\end{thm}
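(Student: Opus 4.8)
\emph{Proof proposal.} The plan is to exhibit the $t$-operation on $A$ as the operation $\wedge_{\f m\in\tmax(A)}t_{\f m}$ and then simply quote Proposition~\ref{locstarfin}. So set $\Delta:=\tmax(A)\subseteq\sss(A)$ and, for each $\f m\in\Delta$, let $t_{\f m}$ be the $t$-operation on the localization $A_{\f m}$ (which is a finite type star operation).

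The only thing that needs checking before invoking the earlier results is that $A=\bigcap_{\f m\in\Delta}A_{\f m}$ and that this intersection is locally finite. For the equality, let $0\neq x=a/b$ with $a,b\in A$ lie in every $A_{\f m}$; then for each $\f m\in\Delta$ we may write $x=r/s$ with $r\in A$ and $s\in A\setminus\f m$, so $s\in\f c:=(bA:_A aA)$ and $\f c\nsubseteq\f m$. Since $\f c=A\cap(b/a)A$ is an intersection of two divisorial fractional ideals, it is a $t$-ideal; as it is not contained in any $t$-maximal ideal it cannot be a proper ideal, whence $\f c=A$, i.e.\ $a\in bA$ and $x\in A$. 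Local finiteness is precisely the $t$-finite character hypothesis: each nonzero $d\in A$ belongs to only finitely many $t$-maximal ideals, hence is a unit in $A_{\f m}$ for all but finitely many $\f m\in\Delta$.

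With these two facts at hand, Proposition~\ref{twedge} gives $t=\wedge_{\f m\in\Delta}t_{\f m}$. Now let $\f a$ be a $t$-locally $t$-finite $t$-ideal, that is, an ideal such that $\f aA_{\f m}$ is $t_{\f m}$-finite for every $\f m\in\Delta$. Applying Proposition~\ref{locstarfin} with $\star_{\f m}:=t_{\f m}$, we conclude that $\f a$ is $\big(\wedge_{\f m\in\Delta}t_{\f m}\big)$-finite, i.e.\ $t$-finite, as wanted; note that the hypothesis that $\f a$ be a $t$-ideal is in fact not used. I do not expect a genuinely hard step here: the argument is a matter of feeding the $t$-finite character hypothesis into the local-finiteness assumption of Proposition~\ref{locstarfin} through Proposition~\ref{twedge}, and the only points deserving a line of justification are the identity $A=\bigcap_{\f m\in\tmax(A)}A_{\f m}$ together with its local finiteness.
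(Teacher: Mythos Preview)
Your argument is correct and follows exactly the route the paper takes: the paper's own proof is just the one-line ``It follows immediately from Proposition~\ref{locstarfin} and Proposition~\ref{twedge}, taking $\Delta=\tmax(A)$.'' You have merely spelled out the two implicit verifications (the equality $A=\bigcap_{\f m\in\tmax(A)}A_{\f m}$ and its local finiteness under the $t$-finite character), which the paper takes as known.
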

\begin{proof}
It follows immediately from Proposition \ref{locstarfin} and
Proposition \ref{twedge}, taking $\Delta = \tmax(A)$.
\end{proof}
The following technical result will be crucial, in the following.
\begin{lem}\label{claim} Let $A$ be an integral domain, $\star$ be a finite type star operation
on $A$, $\ms F$ a collection of ideals of $A$ such that $\bigcap \ms
F$ contains a nonzero element $a\in A$. Set $$\f a :=\left\{ x\in
A:x\prod \ms F'\subseteq aA,\mbox{ for some finite subset } \ms
F'\subseteq \ms F\right\}.
$$
 Then the following conditions hold:
 \begin{enumerate}[\rm (a)]

\item let $F:=\{x_1,\z,x_n\}$ be a nonempty  finite subset of $\f a$. For each $i=1,\z,n$, let $\ms F_{x_i}$ be a finite subcollection of $\ms F$
such that $x_i\prod \ms F_{x_i}\subseteq (a)$, and let $y\in
(x_1,\z,x_n)^\star$, $\ms F':=\bigcup_{i=1}^n\ms F_{x_i}$. Then
$y\prod \ms F'\subseteq (a)$ and, in particular, we have
$$(x_1,\z,x_n)^\star\subseteq \f a.$$
\item  $\f a$ is
a $\star-$ideal.
\item If $\ms F$ is a $\star$-comaximal collection over $a$ of
ideals of $A$, then for each $\f m \in \star-\Max(A)$, we have that:

$$\f aA_{\f m} =
\left\{\begin{array}{ll}
aA_{\f m} &\mbox{ if }  \f m\nsupseteq \f b, \mbox{ for each } \f b \in \ms F \\
a(A \colon \f b_0)A_{\f m} &\mbox{ if } \f b_0 \mbox{ is the unique
ideal in } \ms F \mbox{ contained in } \f m
\end{array}
\right.$$
\item If $\f a$ is $\star-$finite, then there exists a finite subcollection $\widehat{\ms G}$ of $\ms F$ such that
$$
\f a=\left\{x\in A: x\prod \widehat{\ms G}\subseteq aA\right\}.
$$
\end{enumerate}
\end{lem}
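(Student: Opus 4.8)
The plan is to prove the four parts in the order they are stated, using parts (a)--(c) as stepping stones toward (d).

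For part (a), I would argue as follows. Fix the finite subset $F = \{x_1, \ldots, x_n\}$ of $\f a$ and, for each $i$, a finite subcollection $\ms F_{x_i} \subseteq \ms F$ with $x_i \prod \ms F_{x_i} \subseteq (a)$. Set $\ms F' := \bigcup_i \ms F_{x_i}$, a finite subcollection. Since $\ms F_{x_i} \subseteq \ms F'$ and the members of $\ms F$ are ideals of $A$, we get $\prod \ms F' \subseteq \prod \ms F_{x_i}$, hence $x_i \prod \ms F' \subseteq x_i \prod \ms F_{x_i} \subseteq (a)$ for every $i$. Thus the ideal $(x_1, \ldots, x_n) \prod \ms F'$ is contained in $(a) = aA$, which is a $\star$-ideal (being principal, $(aA)^\star = aA^\star = aA$ by axiom (a) of a star operation). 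Now take $y \in (x_1, \ldots, x_n)^\star$. Then $y \prod \ms F' \subseteq \big((x_1, \ldots, x_n) \prod \ms F'\big)^\star$; here I would invoke the standard fact that $(IJ)^\star \supseteq I^\star J$ for a star operation $\star$ (which follows from axioms (a) and (b): $I^\star J = \sum_{b \in J} b I^\star = \sum_b (bI)^\star \subseteq (IJ)^\star$). Since $(x_1, \ldots, x_n)\prod \ms F' \subseteq aA$ and $aA$ is a $\star$-ideal, we conclude $y \prod \ms F' \subseteq aA$, so $y \in \f a$ by definition. In particular $(x_1, \ldots, x_n)^\star \subseteq \f a$.

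Part (b) is then essentially a formality: since $\star$ is of finite type, $\f a^\star = \bigcup \{ F^\star : F \subseteq \f a, F \text{ finitely generated}\}$, and by part (a) each such $F^\star \subseteq \f a$, so $\f a^\star \subseteq \f a$; the reverse inclusion $\f a \subseteq \f a^\star$ holds for any ideal. (I should also check $\f a$ is an ideal: closure under $A$-multiplication is clear from the definition, and closure under addition follows since if $x \prod \ms F_1 \subseteq (a)$ and $x' \prod \ms F_2 \subseteq (a)$ then $(x+x') \prod(\ms F_1 \cup \ms F_2) \subseteq (a)$.) For part (c), I would localize at a $\star$-maximal ideal $\f m$. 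Localization commutes with the finite products and the colon operations involved, so $\f a A_{\f m} = \{ z \in A_{\f m} : z \prod \ms F' A_{\f m} \subseteq aA_{\f m} \text{ for some finite } \ms F' \subseteq \ms F \}$. If $\f m$ contains no member of $\ms F$, then for each $\f b \in \ms F$, $\f b A_{\f m} = A_{\f m}$; since any finite $\ms F'$ then has $\prod \ms F' A_{\f m} = A_{\f m}$, the condition $z \prod \ms F' A_{\f m} \subseteq aA_{\f m}$ becomes $z \in aA_{\f m}$, giving $\f a A_{\f m} = aA_{\f m}$. If $\f m$ contains exactly one member $\f b_0$ (uniqueness comes from $\star$-comaximality: two distinct members have $\star$-sum $A$, hence cannot both sit in a proper $\star$-ideal), then for any finite $\ms F'$ we have $\prod \ms F' A_{\f m} = \f b_0^{e} A_{\f m}$ where the other factors are units; but also the condition for varying $\ms F'$ stabilizes — I would show $z \in \f a A_{\f m}$ iff $z \f b_0 A_{\f m} \subseteq aA_{\f m}$, i.e. $\f a A_{\f m} = a(A : \f b_0) A_{\f m}$, using that $(A : \f b_0^k) A_{\f m}$ need not shrink past $(A : \f b_0) A_{\f m}$ because... \emph{this is where I expect to need care}: I must rule out the case where a higher power $\f b_0^k$ contributes strictly more, which should follow from $\f b_0 A_{\f m}$ being the only relevant factor and a direct check that $x \f b_0^k \subseteq (a)$ for some $k$ forces $x \f b_0 \subseteq (a)$ after localizing (or, alternatively, absorb this by noting $\prod \ms F'$ over subcollections containing $\f b_0$ with multiplicity one already suffices since $\ms F$ is a \emph{collection}, i.e. a set, so each member appears once). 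I would lean on the set interpretation to keep $\prod \ms F' A_{\f m} = \f b_0 A_{\f m}$.

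Finally, for part (d), suppose $\f a$ is $\star$-finite. By definition and the fact that $\star$ is of finite type, there is a finitely generated ideal $F = (x_1, \ldots, x_n) \subseteq \f a$ with $F^\star = \f a^\star = \f a$ (using part (b)). Apply part (a) to this $F$: there is a finite subcollection $\widehat{\ms G} := \ms F' = \bigcup_i \ms F_{x_i} \subseteq \ms F$ with $x_i \prod \widehat{\ms G} \subseteq (a)$ for every $i$, hence $F \prod \widehat{\ms G} \subseteq (a)$, and therefore $\f a \prod \widehat{\ms G} = F^\star \prod \widehat{\ms G} \subseteq (F \prod \widehat{\ms G})^\star \subseteq (aA)^\star = aA$. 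This shows $\f a \subseteq \{ x \in A : x \prod \widehat{\ms G} \subseteq aA \}$. The reverse inclusion is immediate from the definition of $\f a$, since $\widehat{\ms G}$ is itself a finite subcollection of $\ms F$. Hence equality holds, which is the desired conclusion. The main obstacle is the bookkeeping in part (c) — making precise that the colon ideal $(A : \f b_0)$, rather than some $(A : \f b_0^k)$, is what governs $\f a A_{\f m}$ — but parts (a), (b), (d) are routine manipulations with the finite-type hypothesis and the elementary inequality $I^\star J \subseteq (IJ)^\star$.
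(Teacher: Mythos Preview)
Your treatment of parts (a), (b), and (d) is correct and essentially identical to the paper's proof: the key inequality $I^\star J \subseteq (IJ)^\star$ together with the finite-type hypothesis is exactly what the paper uses.

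Part (c), however, has a genuine gap. You assert that
\[
\f a A_{\f m} \;=\; \bigl\{\, z\in A_{\f m}\;:\; z\,\textstyle\prod\ms F'\,A_{\f m}\subseteq aA_{\f m}\ \text{for some finite}\ \ms F'\subseteq\ms F \,\bigr\},
\]
justifying this by ``localization commutes with the finite products and the colon operations involved.'' But localization commutes with a colon $(aA:\prod\ms F')$ only when $\prod\ms F'$ is finitely generated, and the lemma places no finiteness hypothesis on the members of $\ms F$. In fact the displayed set equals $a\,(A_{\f m}:\f b_0A_{\f m})$ in the second case, which can be \emph{strictly larger} than $a\,(A:\f b_0)A_{\f m}$; so even granting your formula you would obtain the wrong ideal. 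Your proposed fix (that $\ms F$ is a set, so $\f b_0$ appears with multiplicity one) addresses a different worry --- powers of $\f b_0$ --- and does not touch this issue.

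The paper sidesteps the problem by never localizing the colon. It argues element-wise: given $x\in\f a$ with witness $\ms F'$, so that $x\prod\ms F'\subseteq aA$ \emph{in $A$}, one chooses for each $\f b\in\ms F'\setminus\{\f b_0\}$ an element $x_{\f b}\in\f b\setminus\f m$ and sets $p:=\prod x_{\f b}\in A\setminus\f m$. Then $xp\,\f b_0\subseteq x\prod\ms F'\subseteq aA$ holds \emph{in $A$}, whence $xp\in a(A:\f b_0)$, and therefore $x/s=xp/(sp)\in a(A:\f b_0)A_{\f m}$. The point is that the containment $xp\,\f b_0\subseteq aA$ is obtained before localizing, so no commutation of colon with localization is needed. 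You correctly flagged (c) as the delicate step; this element-wise trick is the missing idea.
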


\begin{proof}
Firstly, we note that $\f a$ is an ideal of $A$, as it is
immediately seen.

(a). As a matter of fact, we have
$$
y\prod \ms F'\subseteq \left((x_1,\z,x_n)^\star\prod \ms
F'\right)^\star\subseteq \left(\sum\{x_i\prod\ms
F_{x_i}:i=1,\z,n\}\right)^\star\subseteq (a).
$$

(b). Let $x\in \f a^\star$. Since $\star$ is a star operation of
finite type, then there exist elements $f_1,\z,f_n\in \f a$ such
that $x\in (f_1,\z,f_n)^\star$. Then by statement (a), $(f_1,\z
,f_n)^\star\subseteq \f a$. Thus $\f a$ is a $\star-$ideal.

(c) Let $\f m$ be a $\star-$maximal ideal of $A$. Since $\ms F$ is a
$\star$-comaximal collection, $\ms F_{\f m}:=\{\f b \in \ms F:\f
b\subseteq \f m\}$ has cardinality at most one. For each $\f b\in
\ms F\setminus \ms F_{\f m}$, we can pick an element $x_{\f b}\in \f
b\setminus \f m$. If $\ms F_{\f m}$ is empty it is clear that $\f
aA_{\f m}=aA_{\f m}$. In fact, let $\dfrac{x}{s}\in \f aA_{\f m}$,
$x\in\f a$ and $s\in A\setminus \f m$. By definition, there exists a
finite subcollection  $\ms F'\subseteq \ms F$ such that $x\prod \ms
F'\subseteq aA$. Then $\prod_{\f b\in \ms F'}x_{\f b} \in A\setminus
\f m$ and thus
$$
\frac{x}{s}=\frac{x\prod_{\f b\in \ms F'}x_{\f b}}{s\prod_{\f b\in
\ms F'}x_{\f b}}= \frac{ab}{s\prod_{\f b\in \ms F'}x_{\f b}}\in
aA_{\f m}
$$
for some $b\in A$. Now, suppose that there is a unique ideal $\f
b_0\in \ms F_{\f m}$. It is clear that $a(A:\f b_0)\subseteq \f a$,
and hence $a(A:\f b_0)A_{\f m}\subseteq \f a A_{\f m}$. Conversely,
let $\dfrac{x}{s}\in \f aA_{\f m}$ $x\in \f a,s\in A\setminus \f m$
and let $\ms F'$ be a finite subcollection of $\ms F$ such that
$x\prod \ms F'\subseteq aA$. If $\f b_0\notin \ms F'$, then the same
argument given above shows that $\dfrac{x}{s}\in aA_{\f m}\subseteq
a(A:\f b_0)A_{\f m}$. If $\f b_0\in \ms F_{\f m}$, set $p:=\prod_{\f
b\in \ms F'\setminus\{\f b_0\}}x_{\f b}$, if $\ms F'\setminus\{\f
b_0\}\neq \emptyset$, $p:=1$ otherwise. Then  $xp\f b_0\subseteq aA$
and $p\notin \f m$. It follows that   $xpA \subseteq a(A:\f b_0)$
  keeping in mind that $(A:\f b_0)$ is a always a fractional
$\star-$ideal. Then we have
$$
\frac{x}{s}=\frac{xp}{sp}\in a(A:\f b_0)A_{\f m},
$$
and thus (c) is proved.

(d).  Pick elements $x_1,\z,x_r\in \f a$ such that $\f a=\f
a^*=(x_1,\z,x_r)^\star$. For each $i\in \{1,\z,r\}$, let  $\ms G_i$
be  a finite subcollection  of $\ms F$ such that $x_i\prod\ms
G_i\subseteq (a)$. Thus it is enough to choose $\widehat{\ms
G}:=\bigcup_{i=1}^r\ms G_i$ and apply statement (a) to get the
equality $
 \f a=\{x\in A:x\prod \widehat{\ms G}\subseteq (a)\}.
$
 \end{proof}

 We recall that a domain $A$ is
\textit{$v$-coherent}  if for any nonzero finitely generated ideal
$I$ of $A$, $I^{-1}$ is $v$-finite that is, $I^{-1} = J^v$ for some
$J \in \bbf(A)$ (\cite[Proposition 3.6]{FG}). Important classes of
$v$-coherent domains are Noetherian domains, Mori domains, Pr\"ufer
domains, P$v$MD's, finite conductor domains (i.e., $(x)  \cap (y)$
is finitely generated for each $x,y \in A$), coherent domains (i.e.,
the intersection of two finitely generated ideals is finitely
generated). A domain $A$ is \textit{$t$-locally $v$-coherent} if
$A_{\f m}$ is $v$-coherent , for each   $\f m \in \tmax(A)$.

\begin{thm}\label{thm:characterization t-finite}
Let $A$ be an integral domain which is $t$-locally $v$-coherent.
Then the following conditions are equivalent.

\begin{enumerate}[(i)]
\item $A$ has the $t$-finite character;

\item every family of $t$-finite, $t$-comaximal, $t$-ideals over a nonzero element $a \in A$ is finite;

\item every nonzero $t$-locally $t$-finite ideal $I$ (i.e., for any $\m \in \tmax(A)$, $I_\m$
is $t$-finite with respect to the $t$-operation of $A_{\m}$) is
$t$-finite.
\end{enumerate}
\end{thm}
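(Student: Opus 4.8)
The plan is to prove the cycle of implications $(i)\Rightarrow(iii)\Rightarrow(ii)\Rightarrow(i)$, so that the $t$-local $v$-coherence hypothesis is only needed where it is genuinely required.

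\textbf{From $(i)$ to $(iii)$.} This is essentially already done: it is Theorem \ref{t-locally t-finite}. Indeed, if $A$ has the $t$-finite character, then applying Proposition \ref{locstarfin} and Proposition \ref{twedge} with $\Delta=\tmax(A)$ shows that every $t$-locally $t$-finite $t$-ideal is $t$-finite; but a nonzero ideal $I$ which is $t$-locally $t$-finite has $I^t$ $t$-locally $t$-finite as well (using Kang's identity $(IA_{\m})^{t_{\m}}=(I^tA_{\m})^{t_{\m}}$ from \cite[Lemma 3.4(3)]{k}), so $I^t$, hence $I$, is $t$-finite. No coherence hypothesis is used here.

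\textbf{From $(iii)$ to $(ii)$.} Fix a nonzero $a\in A$ and a family $\ms F\in(\Sigma')^t_a$, i.e.\ a $t$-comaximal family over $a$ of proper $t$-finite $t$-ideals. Form the ideal $\f a:=\{x\in A: x\prod\ms F'\subseteq aA \text{ for some finite }\ms F'\subseteq\ms F\}$ as in Lemma \ref{claim}. By Lemma \ref{claim}(b), $\f a$ is a $t$-ideal, and by Lemma \ref{claim}(c), for each $\m\in\tmax(A)$ we have $\f aA_{\m}=aA_{\m}$ unless some (necessarily unique) $\f b_0\in\ms F$ is contained in $\m$, in which case $\f aA_{\m}=a(A:\f b_0)A_{\m}$. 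In the first case $\f aA_{\m}$ is principal, hence $t_{\m}$-finite; in the second case $\f b_0$ is $t$-finite, so $\f b_0A_{\m}$ is $t_{\m}$-finite, and since $A_{\m}$ is $v$-coherent, $(A:\f b_0)A_{\m}=(A_{\m}:\f b_0A_{\m})$ is $v_{\m}$-finite, hence $t_{\m}$-finite, and multiplying by the unit-generated-up-to-$a$ ideal $aA_{\m}$ keeps it $t_{\m}$-finite. So $\f a$ is $t$-locally $t$-finite, hence by $(iii)$ it is $t$-finite. Now Lemma \ref{claim}(d) gives a \emph{finite} subcollection $\widehat{\ms G}\subseteq\ms F$ with $\f a=\{x\in A:x\prod\widehat{\ms G}\subseteq aA\}$. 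The point is that $\f b\not\subseteq\f b'$ for $\f b\neq\f b'$ in $\ms F$ forces $\widehat{\ms G}=\ms F$: pick any $\f b\in\ms F$, choose for each $\f c\in\widehat{\ms G}$ with $\f c\neq\f b$ an element $x_{\f c}\in\f c\setminus(\text{a }t\text{-maximal ideal containing }\f b)$—more carefully, using $t$-comaximality, for each $\f c\in\ms F\setminus\{\f b\}$ the ideal $(\f b+\f c)^t=A$ lets us write $1=b_{\f c}+c_{\f c}$ with $b_{\f c}\in\f b$, $c_{\f c}\in\f c$; then the product over $\f c\in\widehat{\ms G}\setminus\{\f b\}$ of the $c_{\f c}$ lies in $\prod(\widehat{\ms G}\setminus\{\f b\})$ and is congruent to $1$ modulo $\f b$. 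If $\f b\notin\widehat{\ms G}$, this product lands in $\f a$ by definition of $\widehat{\ms G}$, so $\prod(\widehat{\ms G}\setminus\{\f b\})\subseteq\f a$, i.e.\ $1\equiv(\text{element of }\f a)\pmod{\f b}$; but $\f a\subseteq\f b$ would give $1\in\f b$ (one checks $\f a\subseteq\f b$ whenever $\f b\notin\widehat{\ms G}$, since every element of $\f a$ times a product over $\widehat{\ms G}$ lies in $aA\subseteq\f b$ and that product is a unit mod $\f b$), contradicting properness of $\f b$. Hence every $\f b\in\ms F$ lies in $\widehat{\ms G}$, so $\ms F=\widehat{\ms G}$ is finite.

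\textbf{From $(ii)$ to $(i)$.} This is immediate from the machinery already developed: given $(ii)$, for each nonzero $a$ every family in $(\Sigma')^t_a$ is finite, so by Proposition \ref{fin_imply_add} the hypothesis of Proposition \ref{condizionesufficientemax} is satisfied, and Proposition \ref{condizionesufficientemax}(iii) produces a finite family $\ms F\in\Max_{\subseteq}((\Sigma')^t_a)\cap\Sigma^t_a$; then Proposition \ref{starcaratterefinitezza} shows $V(a)\cap\tmax(A)$ is finite. Since $a$ was arbitrary, $A$ has the $t$-finite character. (Equivalently one just invokes Proposition \ref{caratterizzazione_star_carattere_finito} with $\star=t$.)

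\textbf{Main obstacle.} The delicate step is $(iii)\Rightarrow(ii)$, and within it two points deserve care: first, verifying that $\f a$ is genuinely $t$-locally $t$-finite, which is exactly where $v$-coherence of $A_{\m}$ is used to pass from $\f b_0$ being $t$-finite to $(A:\f b_0)A_{\m}=(A_{\m}:\f b_0A_{\m})$ being $v_{\m}$-finite (one must also check that localization commutes with the relevant conductor, and that $a\cdot(-)$ preserves $t_{\m}$-finiteness, which is clear since $a$ is a nonzerodivisor). Second, the combinatorial argument that the finite subcollection $\widehat{\ms G}$ from Lemma \ref{claim}(d) must equal all of $\ms F$: here one leverages the $t$-comaximality together with the fact that the members of $\ms F$ are \emph{proper}, precisely as in the proof of Proposition \ref{starcaratterefinitezza}, to rule out $\f b\notin\widehat{\ms G}$. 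Everything else is bookkeeping with the results already proved in the paper.
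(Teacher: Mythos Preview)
Your overall architecture matches the paper's: the same cycle $(i)\Rightarrow(iii)\Rightarrow(ii)\Rightarrow(i)$, the same invocation of Theorem~\ref{t-locally t-finite} and Proposition~\ref{caratterizzazione_star_carattere_finito} for the easy implications, and the same use of Lemma~\ref{claim} to build the ideal $\f a$ and reduce to a finite subcollection $\widehat{\ms G}$ in the step $(iii)\Rightarrow(ii)$. The $t$-local $t$-finiteness of $\f a$ via $v$-coherence is also handled as in the paper.

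However, your final combinatorial step in $(iii)\Rightarrow(ii)$ contains a genuine error. From $(\f b+\f c)^t=A$ you \emph{cannot} write $1=b_{\f c}+c_{\f c}$ with $b_{\f c}\in\f b$, $c_{\f c}\in\f c$: $t$-comaximality is strictly weaker than comaximality. For instance, in $k[x,y]$ the height-one primes $(x)$ and $(y)$ satisfy $((x)+(y))^t=(x,y)^t=A$, yet $(x)+(y)=(x,y)\subsetneq A$. Your subsequent claims that the product $\prod c_{\f c}$ is a unit modulo $\f b$, and that $\prod(\widehat{\ms G}\setminus\{\f b\})\subseteq\f a$, both rest on this false step (and the latter inclusion is not justified in any case: $\f a=\{x:x\prod\widehat{\ms G}\subseteq aA\}$ gives no reason for $\prod\widehat{\ms G}$ itself to lie in $\f a$).

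The paper closes this gap differently, staying entirely inside the $t$-operation. Given $\f b_1\in\ms F\setminus\widehat{\ms G}$, one first observes $a(A:\f b_1)\subseteq\f a$ directly from the definition of $\f a$ (take $\ms F'=\{\f b_1\}$), whence $(A:\f b_1)\prod\widehat{\ms G}\subseteq A$, i.e.\ $\prod\widehat{\ms G}\subseteq (A:(A:\f b_1))=\f b_1^v=\f b_1$, using that a $t$-finite $t$-ideal is divisorial. Then one expands
\[
A=\Bigl(\prod_{\f c\in\widehat{\ms G}}(\f b_1+\f c)\Bigr)^t=\Bigl(\prod\widehat{\ms G}+\f c'\Bigr)^t
\]
for some $\f c'\subseteq\f b_1$, and since $\prod\widehat{\ms G}\subseteq\f b_1$ as well, this forces $\f b_1^t=\f b_1=A$, contradicting properness. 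The key point you are missing is that divisoriality of $\f b_1$ replaces the element-level decomposition $1=b+c$ that is unavailable here.
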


\begin{proof}
The equivalence (i) $\Leftrightarrow$ (ii) is given by Proposition
\ref{caratterizzazione_star_carattere_finito} and it
 holds more in general for any domain $A$ and by replacing $t$ with any finite type star operation.

(iii) $\Rightarrow$ (ii).

Let $\ms F \in \Sigma^t_a$ (where, we recall that $\Sigma^t_a$ is
the set of collections of $t$-comaximal, $t$-finite $t$-ideals of
$A$ containing $a$) and set
$$\f a:=\left\{ x\in A:x\prod \ms F'\subseteq (a),\mbox{ for some finite subset } \ms F'\subseteq \ms F\right\}.
$$
By Lemma \ref{claim}, $\f a$ is a $t-$ideal of $A$ and, moreover,
for each $t-$maximal ideal $\f m$ of $A$, we have
$$\f aA_{\f m} =
\left\{\begin{array}{ll}
aA_{\f m} &\mbox{ if }  \f m\nsupseteq \f b, \mbox{ for each } \f b \in \ms F \\
a(A \colon \f b_0)A_{\f m} &\mbox{ if } \f b_0 \mbox{ is the unique
ideal in } \ms F \mbox{ contained in } \f m
\end{array}
\right.$$ Thus, if $\f b_0$ is the unique ideal of $\ms F$ contained
in $\f m$, we have that $\f aA_{\f m}=a(A:\f b_0)A_{\f m} = (A_{\f
m}:\f b_0A_{\f m})$, and it is $t_{\f m}$-finite since $\f b_0A_{\f
m}$ is $t_{\f m}$-finite (see  \cite[Lemma 3.4(3)]{k})  and $A_{\f
m}$ is $v$-coherent. This shows that $\f a$ is a $t$-locally
$t$-finite, $t$-ideal and hence it is $t$-finite by assumption.
Again by Lemma \ref{claim}, it follows that
$$\f a=\left\{x\in A:x\prod \widehat{\ms G}\subseteq (a)\right\},$$
for some finite subcollection $\widehat{\ms G}$ of $\ms F$. Now,
consider an ideal $\f b_1\in \ms F\setminus\widehat{\ms G}$. By the
definition of $\f a$, it follows immediately that $a(A:\f
b_1)\subseteq \f a$, and thus $a(A:\f b_1)\prod \widehat{\ms
G}\subseteq aA$, that is, $(A:\f b_1)\prod \widehat{\ms G}\subseteq
A$. Hence we have that $\prod \widehat{\ms G} \subseteq  (A:(A:\f
b_1))= \f b_1^{v}=\f b_1$, keeping in mind that $\f b_1$ is a
$t$-finite $t$-ideal, so it is divisorial. Since $\ms F$ is a
$t$-comaximal collection of ideals and $\f b_1\in \ms F\setminus
\widehat{\ms G}$, we have that
$$
A=A^t=\left(\prod_{\f b\in \widehat{\ms G}}(\f b_1+\f
b)^t\right)^t=\left(\prod_{\f b\in \widehat{\ms G}}(\f b_1+\f
b)\right)^t=\left(\prod\widehat{\ms G}+\f c\right)^t,
$$
for some ideal $\f c\subseteq \f b_1$. Finally, using the fact that
$\prod \widehat{\ms G}\subseteq \f b_1$, it follows that
$$A\subseteq \left(\prod\widehat{\ms G}+\f c\right)^t\subseteq \f b_1^t =\f b_1.
$$
This proves that $\{A\}=\ms F\setminus \widehat{\ms G}$. Since
$\widehat{\ms G}$ is finite, the statement follows.

(i) $\Rightarrow$ (iii) See Theorem \ref{t-locally t-finite}, and
observe that it holds for any domain (without the $t$-locally
$v$-coherence hypothesis) .
\end{proof}

\begin{remarks}

\begin{enumerate}[(a)]

\item A $t$-locally $v$-coherent domain is not necessarily a P$v$MD.
In fact any  Noetherian domain is $t$-locally $v$-coherent (and it
is not always P$v$MD). This shows that Theorem
\ref{thm:characterization t-finite} strictly generalizes the results
obtained for Pr\"ufer domains and P$v$MD's (\cite[Proposition
6.11]{hk} and \cite[Proposition 5]{zaf}).

\item  More generally a $t$-locally $v$-coherent domain is not necessarily $v$-coherent (while a $v$-coherent domain is locally $v$-coherent). The example of a locally $v$-coherent domain that is not $v$-coherent given in \cite[Example 3.3]{gh} is easily seen to be also $t$-locally $v$-coherent.
However, as a consequence of Proposition \ref{t-locally t-finite}, a
$t$-locally $v$-coherent domain with $t$-finite character is
$v$-coherent (cf. with \cite[Proposition 3.1]{gh}, where it is
mentioned that locally $v$-coherent domains with finite character
are $v$-coherent). So, the domains of Theorem
\ref{thm:characterization t-finite} are in fact $v$-coherent.

\item Note that by Proposition \ref{locstarfin} and Proposition \ref{twedge}
it follows easily that if $A$ can be written as a locally finite
intersection of $v$-coherent localizations, then $A$ is
$v$-coherent. This generalizes both the results in (b) and
\cite[Proposition 3.1]{gh}.

\item The main hypothesis of Theorem
\ref{thm:characterization t-finite} stating that $A$ is $t$-locally
$v$-coherent is not always required in order to get the equivalences
(i)-(ii)-(iii). In fact, take $A$ to be a pseudo-valuation domain
with maximal ideal $M$ and such that $M$ is not principal as an
ideal of $M^{-1}=V$. This is equivalent to requiring that $A$ is not
$v$-coherent (\cite[Remark 3.14]{gh}). It is well-known that $M$ is
a $t$-ideal of $A$, so $A$ is not a $t$-locally $v$-coherent domain,
but conditions (i)-(ii)-(iii) are trivially satisfied.

\item In general, Theorem \ref{thm:characterization t-finite} cannot
be extended to any finite type star operation. For instance, if we
take the identity operation $d$ at the place of $t$, Theorem
\ref{thm:characterization t-finite} fails. In fact, a Noetherian
domain do not need to have the finite character on maximal ideals,
but each locally finitely generated ideal is finitely generated.
\end{enumerate}
\end{remarks}

We have already observed that Noetherian domains may not have the
finite character, but they always have the $t$-finite character
(since, more generally, Mori domains have the $t$-finite character
\cite[Proposition 2.2]{BG}). Conversely, the following example shows
that there exist domains with the finite character which do not have
the $t$-finite character.

\begin{ex}\label{ex:finite character}
We consider the following pullback diagram:

$$
\CD
R  @>>> D\\
@VVV @VVV \\
V @>{\varphi}>> k = V/M
\endCD
$$

\smallskip
\noindent where $V$ is a valuation domain with maximal ideal $M$,
residue field $k$ and $D$ is a $2$-dimensional, local Krull domain
with quotient field $k$.

The domain $R$ turns out to be local. By \cite[Theorem 2.18]{gh} the
$t$-maximal ideals of $R$ are the inverse images of the $t$-maximal
ideals of $D$ (which are the height-one primes) and they all contain
$M$. Thus $R$ does not have the $t$-finite character. But $R$  has
the finite character as being local.
\end{ex}

\section{The local character of $\star-$invertibility}
The following result has been proved by M. Zafrullah
\cite[Proposition 4]{zaf} for the $t$-operation. We give a different
proof of Zafrullah's result and generalize it to every star
operation of finite type. The proof of the following Proposition
uses an argument similar to the one given in  the proof of Theorem
\ref{thm:characterization t-finite}.

\begin{prop} \label{starlocprinc}
Let $A$ be an integral domain, $\star$ be a star operation of finite
type on $A$ and $a\in A\setminus\{0\}$. Assume that every
$\star-$locally principal ideal of $A$ is $\star-$invertible. Then
every $\star$-comaximal collection over $a$ of $\star-$invertible
$\star-$ideals is finite.
\end{prop}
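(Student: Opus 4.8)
The strategy is to mimic the proof of (iii) $\Rightarrow$ (ii) in Theorem \ref{thm:characterization t-finite}, replacing "$t$-locally $t$-finite ideals are $t$-finite" by the hypothesis "$\star$-locally principal ideals are $\star$-invertible". Let $\ms F$ be a $\star$-comaximal collection over $a$ of $\star$-invertible $\star$-ideals, and form the ideal
$$\f a :=\left\{ x\in A:x\prod \ms F'\subseteq aA,\mbox{ for some finite subset } \ms F'\subseteq \ms F\right\}$$
as in Lemma \ref{claim}. That lemma already tells us $\f a$ is a $\star$-ideal and computes $\f aA_{\f m}$ for each $\f m\in\star\text{-}\Max(A)$: it equals $aA_{\f m}$ when no member of $\ms F$ is contained in $\f m$, and $a(A\colon\f b_0)A_{\f m}$ when $\f b_0$ is the unique member of $\ms F$ inside $\f m$. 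In the latter case $a(A\colon\f b_0)A_{\f m}=(A_{\f m}\colon \f b_0 A_{\f m})$, and since $\f b_0$ is $\star$-invertible, $\f b_0A_{\f m}$ is principal (a $\star$-invertible ideal is locally principal at every $\star$-maximal ideal — this is the standard fact that $(\f b_0\f b_0^{-1})^\star=A$ forces $\f b_0A_{\f m}$ invertible, hence principal, in the local ring $A_{\f m}$), so $\f a A_{\f m}$ is itself principal. In the former case it is principal as well. Hence $\f a$ is $\star$-locally principal, so by hypothesis $\f a$ is $\star$-invertible — in particular $\star$-finite.

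Now invoke Lemma \ref{claim}(d): since $\f a$ is $\star$-finite there is a finite subcollection $\widehat{\ms G}\subseteq\ms F$ with $\f a=\{x\in A:x\prod\widehat{\ms G}\subseteq aA\}$. Pick any $\f b_1\in\ms F\setminus\widehat{\ms G}$. From $a(A\colon\f b_1)\subseteq\f a$ we get $(A\colon\f b_1)\prod\widehat{\ms G}\subseteq A$, i.e. $\prod\widehat{\ms G}\subseteq(A\colon(A\colon\f b_1))=\f b_1^v$. Here is the one point where the argument must diverge slightly from the $v$-coherent case: in Theorem \ref{thm:characterization t-finite} one used that a $t$-finite $t$-ideal is divisorial to conclude $\f b_1^v=\f b_1$. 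For a general finite type $\star$ and a $\star$-invertible $\star$-ideal $\f b_1$ we instead use that a $\star$-invertible ideal is $v$-invertible, hence divisorial: from $(\f b_1\f b_1^{-1})^\star=A$ we get $(\f b_1\f b_1^{-1})^v=A$, so $\f b_1$ is $v$-invertible and therefore a $v$-ideal, giving $\f b_1^v=\f b_1$ and thus $\prod\widehat{\ms G}\subseteq\f b_1$. Using $\star$-comaximality of the distinct members of $\widehat{\ms G}\cup\{\f b_1\}$, expand
$$A=A^\star=\left(\prod_{\f b\in\widehat{\ms G}}(\f b_1+\f b)^\star\right)^\star=\left(\prod_{\f b\in\widehat{\ms G}}(\f b_1+\f b)\right)^\star=\left(\prod\widehat{\ms G}+\f c\right)^\star$$
for some $\f c\subseteq\f b_1$, whence $A\subseteq(\prod\widehat{\ms G}+\f c)^\star\subseteq\f b_1^\star=\f b_1$. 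So $\f b_1$ is not proper, forcing $\ms F\setminus\widehat{\ms G}\subseteq\{A\}$; as $\widehat{\ms G}$ is finite, $\ms F$ is finite.

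**The main obstacle.**
The delicate step is verifying that $\f a$ really is $\star$-locally principal, i.e. pinning down that $(A\colon\f b_0)A_{\f m}=(A_{\f m}\colon\f b_0A_{\f m})$ and that $\f b_0A_{\f m}$ is principal because $\f b_0$ is $\star$-invertible. The localization-of-colon identity needs $\f b_0$ to be finitely generated, which is fine since a $\star$-invertible ideal is $\star$-finite, and one may replace $\f b_0$ by a finitely generated ideal $\f b_0'\subseteq\f b_0$ with $(\f b_0')^\star=\f b_0$; then $\f b_0'A_{\f m}$ is a nonzero finitely generated ideal of the local ring $A_{\f m}$, and $\star$-invertibility of $\f b_0$ gives, upon localizing, $\f b_0'A_{\f m}$ invertible in $A_{\f m}$, hence principal. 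One must also check that the two displayed cases of $\f aA_{\f m}$ genuinely exhaust all $\f m\in\star\text{-}\Max(A)$ — but that is exactly the content of Lemma \ref{claim}(c), which applies since $\ms F$ is a $\star$-comaximal collection over $a$. Everything else is a faithful transcription of the earlier argument.
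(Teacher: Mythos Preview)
Your proof is correct and follows essentially the same route as the paper's: construct $\f a$ via Lemma \ref{claim}, use $\star$-invertibility of the members of $\ms F$ to see that $\f a$ is $\star$-locally principal (hence $\star$-invertible, hence $\star$-finite), extract the finite subcollection $\widehat{\ms G}$, and show any $\f b_1\in\ms F\setminus\widehat{\ms G}$ must equal $A$. The only cosmetic difference is in deducing $\prod\widehat{\ms G}\subseteq\f b_1$: you pass through $\f b_1^v=\f b_1$ via divisoriality of $\star$-invertible ideals, whereas the paper multiplies directly by $((A:\f b_1)\f b_1)^\star=A$ to cancel---both are equivalent one-line uses of $\star$-invertibility.
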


\begin{proof} Let $\ms F$ be a $\star$-comaximal collection over $a$ of
$\star-$invertible $\star-$ideals. Now, let
$$\f a:=\left\{ x\in A:x\prod \ms F'\subseteq (a),\mbox{ for some finite subset } \ms F'\subseteq \ms F\right\}.
$$
By Lemma \ref{claim} and the fact that every ideal in $\ms F$ is
$\star-$invertible, it follows that $\f a$ is a $\star-$locally
principal $\star-$ideal, and hence it is $\star-$finite, since it is
$\star-$invertible, by assumption. Thus, by Lemma \ref{claim}(c),
there exists a finite subcollection $\widehat{\ms G}$ of $\ms F$
such that
$$
\f a=\left\{x\in A: x\prod \widehat{\ms G}\subseteq (a)\right\}.
$$
 Now, consider an ideal $\f b_1\in \ms F\setminus\widehat{\ms G}$.
By the  definition of $\f a$, it follows immediately that $a(A:\f
b_1)\subseteq \f a$, and thus $a(A:\f b_1)\prod \widehat{\ms
G}\subseteq aA$, that is, $(A:\f b_1)\prod \widehat{\ms G}\subseteq
A$. Hence we have
$$
\prod \widehat{\ms G}=\prod \widehat{\ms G} ((A:\f b_1)\f
b_1)^\star\subseteq \left(\prod \widehat{\ms G}(A:\f b_1)\f
b_1\right)^\star\subseteq \f b_1^{\star}=\f b_1,
$$
keeping in mind that $\f b_1$ is a $\star-$invertibile
$\star-$ideal. Since $\ms F$ is a $\star$-comaximal collection of
ideals and $\f b_1\in \ms F\setminus \widehat{\ms G}$, we have
$$
A=A^\star=\left(\prod_{\f b\in \widehat{\ms G}}(\f b_1+\f
b)^\star\right)^\star=\left(\prod_{\f b\in \widehat{\ms G}}(\f
b_1+\f b)\right)^\star=\left(\prod\widehat{\ms G}+\f c\right)^\star,
$$
for some ideal $\f c\subseteq \f b_1$. Finally, using the fact that
$\prod \widehat{\ms G}\subseteq \f b_1$, we have
$$A\subseteq \left(\prod\widehat{\ms G}+\f c\right)^\star\subseteq \f b_1^\star =\f b_1.
$$
This proves that $\{A\}=\ms F\setminus \widehat{\ms G}$. Since
$\widehat{\ms G}$ is finite, the statement follows. \end{proof}

\bigskip

\begin{prop}\label{t-loc star-loc}
Let $A$ be a domain in which   each   $t$-locally principal
$t$-ideal is $t$-finite. Then, for any star operation of finite
type, each $\star-$locally principal $\star-$ideal is
$\star-$finite.   In particular, a locally principal ideal is
finitely generated (and so, invertible).
\end{prop}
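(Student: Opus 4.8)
The statement has two parts. First, I would show: if in $A$ every $t$-locally principal $t$-ideal is $t$-finite, then for any finite type star operation $\star$, every $\star$-locally principal $\star$-ideal is $\star$-finite. The key observation is that any finite type star operation $\star$ satisfies $\star \leq t$, so $\star$-maximal ideals need not coincide with $t$-maximal ideals, but — and this is the crucial point — every $\star$-ideal is contained in a $\star$-maximal ideal, every $\star$-maximal ideal is prime, and localizations at $\star$-maximal ideals detect $\star$-closure. The plan is to take a $\star$-locally principal $\star$-ideal $I$ and relate it to its $t$-behavior. Since $\star \leq t$, a $\star$-maximal ideal $\f m$ is a prime, and at $A_{\f m}$ the ideal $IA_{\f m}$ is principal by hypothesis; I want to deduce that $I$ (or $I^t$) is $t$-locally principal. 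The subtle issue is comparing the set $\star\text{-}\Max(A)$ with $\tmax(A)$: a $t$-maximal ideal is in particular a $\star$-ideal (as $\star\le t$ forces $\f m^\star\subseteq\f m^t=\f m$), hence contained in some $\star$-maximal ideal; localizing at that $\star$-maximal ideal and then at the smaller $t$-prime shows $I$ is locally principal at every $t$-maximal ideal. Then the hypothesis gives $I^t$ is $t$-finite, so $I^t = J^t$ for a finitely generated $J \subseteq I$. Finally I must pass back from $t$-finiteness to $\star$-finiteness: here I would argue that $(IJ^{-1})$ behaves well, or more directly use Proposition~\ref{starlocprinc}-style reasoning — actually the cleanest route is to invoke Proposition~\ref{starlocprinc} is about invertibility, so instead I expect to need a direct argument that a $t$-finite $\star$-ideal that is $\star$-locally principal is $\star$-finite, using that $I^\star \subseteq I^t = J^t$ and that at each $\star$-maximal ideal $I_{\f m}$ is principal hence equals $J_{\f m}$ after enlarging $J$ by finitely many generators (a Proposition~\ref{locstarfin}-type gluing, except the relevant intersection $A=\bigcap_{\f m\in\star\text{-}\Max(A)}A_{\f m}$ need not be locally finite, which is exactly where the difficulty concentrates).

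The main obstacle, as I see it, is precisely that $\star = \wedge_{\f m \in \star\text{-}\Max(A)} d_{\f m}$-type representations are not locally finite in general, so Proposition~\ref{locstarfin} does not apply directly; one cannot simply glue local generators. The workaround I would pursue: given the $\star$-locally principal $\star$-ideal $I$, for a fixed nonzero $a \in I$, form the ideal $\f a$ exactly as in Lemma~\ref{claim} (with $\ms F$ a suitable $\star$-comaximal family, or rather directly work with $I$ itself), and use the $t$-finiteness hypothesis to conclude. Actually the slicker path: since $I$ is $\star$-locally principal it is in particular $t$-locally principal (by the prime-comparison argument above, every $t$-maximal ideal sits inside a $\star$-maximal one, and principal localizes to principal), and $I^t$ is a $t$-ideal that is $t$-locally principal, hence $t$-finite by hypothesis. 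So $I^t = (x_1,\dots,x_n)^t$. Now at each $\f m\in\star\text{-}\Max(A)$, $I_{\f m}=(y_{\f m})A_{\f m}$ is principal; since $I^\star\subseteq I^t=(x_1,\dots,x_n)^t$ and the $x_i\in I^t$ can be taken inside $I$ (as $t$ is finite type), enlarging to $J:=(x_1,\dots,x_n)$ gives $J\subseteq I$ with $J_{\f m}=I_{\f m}$ at each $\f m$: indeed $J_{\f m}\subseteq I_{\f m}=(y_{\f m})$, and $y_{\f m}\in I_{\f m}\subseteq (I^t)_{\f m}=(I^\star)_{\f m}\subseteq$ … — here I need $(I^t)_{\f m}=J_{\f m}$, which holds because $\f m$ contains some $t$-prime where they agree — this is the delicate step to nail down. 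Granting $J_{\f m}=I_{\f m}$ for all $\f m\in\star\text{-}\Max(A)$, we get $J^\star=I^\star$, so $I$ is $\star$-finite.

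For the parenthetical "In particular" claim: take $\star = d$, the identity, which is a finite type star operation. A locally principal ideal is a $d$-locally principal $d$-ideal, so by the first part it is $d$-finite, i.e. finitely generated. A finitely generated locally principal ideal is locally free of rank one, hence projective of rank one over the domain $A$, hence invertible — this is the standard fact that a finitely generated module that is locally free is projective (finite presentation being automatic here since $A$ is a domain and the ideal is torsion-free of rank one, or simply: finitely generated + locally principal over a domain $\Rightarrow$ invertible, via $II^{-1}$ being locally $A_{\f m}$ at every maximal $\f m$). So $I$ is invertible, completing the proof.
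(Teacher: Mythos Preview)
Your reduction step is exactly the paper's: since $\star\le t$, every $t$-maximal ideal is a $\star$-ideal, hence lies in some $\star$-maximal ideal, so a $\star$-locally principal $\star$-ideal $I$ is automatically $t$-locally principal; the hypothesis then gives a finitely generated $J=(x_1,\dots,x_n)\subseteq I$ with $J^t=I^t$. Your treatment of the ``in particular'' clause (take $\star=d$; finitely generated plus locally principal implies invertible) is also fine.

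The gap is precisely the step you flag as ``delicate'': you want $J_{\f m}=I_{\f m}$ for every $\star$-maximal $\f m$, and you do not prove it. What you can deduce from Kang's result and principality of $I_{\f m}$ is $I_{\f m}=I^t_{\f m}=J^t_{\f m}$, but $J^t_{\f m}$ need not equal $J_{\f m}$ when $\f m$ is only $\star$-maximal (not $t$-maximal). So the gluing argument $J^\star=I^\star$ is not justified, and there is no evident repair along those lines.

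The paper avoids this entirely by proving $\star$-invertibility of $I$ directly. From $I^t=(x_1,\dots,x_n)^t$ one gets
\[
I^{-1}=(A:I)=(A:I^t)=(A:(x_1,\dots,x_n))=\bigcap_{i=1}^n x_i^{-1}A,
\]
and since this is a \emph{finite} intersection it commutes with localization. At a $\star$-maximal $\f m$ with $I_{\f m}=aA_{\f m}$ one then computes
\[
(II^{-1})_{\f m}=aA_{\f m}\cdot\bigcap_{i=1}^n x_i^{-1}A_{\f m}=\bigcap_{i=1}^n (a x_i^{-1})A_{\f m}\supseteq A_{\f m},
\]
the last containment because each $x_i\in I\subseteq aA_{\f m}$ forces $x_i/a\in A_{\f m}$. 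Thus $(II^{-1})^\star=A$, so $I$ is $\star$-invertible, hence $\star$-finite. The missing idea in your attempt is to use the $t$-finite presentation to control $I^{-1}$ (not $I$ itself) and pass through invertibility rather than trying to match $J$ and $I$ locally.
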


\begin{proof}
Let $I$ be a $\star-$ideal of $A$ that is $\star-$locally principal.
Then $I$ is also $t$-locally principal since a $t$-ideal is in
particular a $\star-$ideal and each $t$-maximal ideal is contained
in a $\star-$maximal ideal. So $I$ is $t$-finite, say $I := (a_1,
\cdots,a_n)^t$, for some $a_1, \cdots, a_n \in  I $. We show that
$I$ is $\star-$invertible, that is $(II^{-1})^\star = A$, and so it
is $\star-$finite. To see this, it is enough to show that for each
$\star-$maximal ideal $\f m$ of $A$, $(II^{-1})_{\f m} = A_{\f m}$.
Now, if $IA_{\f m} = aA_{\f m}$, we have that:
\begin{align}
\nonumber A_{\f m} \supseteq (II^{-1})_{\f m} &= I_{\f m}I^{-1}_{\f
m} =
  aA_{\f m}(a_1^{-1}A \cap \cdots \cap a_n^{-1}A)_{\f m} \\
\nonumber &= aa_1^{-1}A_{\f m} \cap \cdots \cap aa_n^{-1}A_{\f m}
\supseteq A_{\f m}.\end{align}

 This proves the statement.
\end{proof}

Proposition \ref{t-loc star-loc} does not hold if we replace the
$t$-operation with another finite type star operation $\star$. This
is what we show, for instance, in the next example with $\star = d$.

\begin{ex}\label{inv not tinv}

Consider the domain $R$ constructed in Example \ref{ex:finite
character}. Since $R$ is local, each locally principal ideal is
invertible.

By \cite[Theorem 4.13]{gh} $R$ is a P$v$MD, so it  is $t$-locally
$v$-coherent.  Since $R$ does not have the $t$-finite character,
then
  there exists a nonzero
ideal $I$ that is $t$-locally principal but not $t$-finite.

\end{ex}

\bigskip
Now we  look more closely to the proof of Proposition
\ref{starlocprinc} and show that this result can be slightly
improved in the case of the $t$-operation. We will call an ideal $\f
a$   \textit{$t$-principal} if $\f a^t$ is principal.

 The fact that
the family $\ff$ is composed of $\star-$invertible ideals is used
twice in the proof. First, to show that $a A_\m = a (A : \f
b_0)A_\m$ is a principal ideal. Here, it is clear that it is
sufficient to assume that $(A : \f b_0)$ is principal when localized
at the $\star-$maximal ideals. 
If we assume $\b_0$  to be $t$-finite, it is easily seen that
$(A:\mathfrak{b}_0)A_\m = (A_\m : \mathfrak{b}_0 A_\m)$. So
$(A:\mathfrak{b}_0)A_\m$ is principal if and only if $(\f b_0A_\m)^t
=(\f b_0 A_\m)^v  $ is principal. Thus, in this point of the proof,
we use only the fact that $\f b_0$ is $t$-finite and $\f b_0 A_\m$
is $t$-principal.

The other point of the proof where   $\star-$invertibility for
ideals   of $\ff$ is used is where   from the containment $(A:\f
b_1)\prod \widehat{\ms G}\subseteq A$ we deduce that $\prod
\widehat{\ms G}\subseteq \f b_1$. But this is true in our case since
$\f b_1$ is a $t$-finite $t$-ideal.

In conclusion, for the $t$-operation, Proposition \ref{starlocprinc}
  can be   modified  as follows:

\begin{prop} \label{tlocprinc}
Let $A$ be an integral domain and $a\in A\setminus\{0\}$. Assume
that every $t$-locally principal ideal of $A$ is $t$-invertible.
Then every $t$-comaximal collection over $a$ of  $t$-finite
$t$-locally $t$-principal $t$-ideals is finite.
\end{prop}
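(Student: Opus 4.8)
The statement is exactly the $t$-operation analogue of Proposition~\ref{starlocprinc}, and the discussion immediately preceding it identifies precisely where the weaker hypothesis suffices. So the plan is to run the proof of Proposition~\ref{starlocprinc} essentially verbatim, replacing ``$\star$-invertible'' by ``$t$-finite $t$-locally $t$-principal'' and checking at each of the two critical points that the weaker assumption is enough, exactly as flagged in the paragraphs above.

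\medskip
\noindent\textbf{Setup and the ideal $\f a$.} Let $\ms F$ be a $t$-comaximal collection over $a$ of $t$-finite, $t$-locally $t$-principal $t$-ideals. As in the proof of Proposition~\ref{starlocprinc}, set
$$\f a:=\left\{ x\in A:x\prod \ms F'\subseteq (a),\mbox{ for some finite subset } \ms F'\subseteq \ms F\right\}.$$
By Lemma~\ref{claim}(b), $\f a$ is a $t$-ideal, and by Lemma~\ref{claim}(c), for each $\f m\in t\text{-}\Max(A)$ we have $\f aA_{\f m}=aA_{\f m}$ when no member of $\ms F$ is contained in $\f m$, and $\f aA_{\f m}=a(A:\f b_0)A_{\f m}$ when $\f b_0$ is the unique member of $\ms F$ contained in $\f m$. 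The first point (the one flagged above) is that since $\f b_0$ is $t$-finite, $(A:\f b_0)A_{\f m}=(A_{\f m}:\f b_0 A_{\f m})$ (by \cite[Lemma~3.4(3)]{k} together with the standard fact that forming $(A:-)$ commutes with localization at a finitely generated ideal), and this equals $((\f b_0 A_{\f m})^v)^{-1}$; since $\f b_0 A_{\f m}$ is $t$-principal, i.e.\ $(\f b_0 A_{\f m})^t=(\f b_0 A_{\f m})^v$ is principal, its inverse is principal, so $\f aA_{\f m}$ is principal. Hence $\f a$ is a $t$-locally principal $t$-ideal, so by hypothesis it is $t$-invertible, in particular $t$-finite. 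By Lemma~\ref{claim}(d) there is a finite subcollection $\widehat{\ms G}\subseteq\ms F$ with
$$\f a=\left\{x\in A: x\prod \widehat{\ms G}\subseteq (a)\right\}.$$

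\medskip
\noindent\textbf{Closing the argument.} Now take $\f b_1\in\ms F\setminus\widehat{\ms G}$. From the definition of $\f a$, $a(A:\f b_1)\subseteq\f a$, hence $a(A:\f b_1)\prod\widehat{\ms G}\subseteq aA$, i.e.\ $(A:\f b_1)\prod\widehat{\ms G}\subseteq A$. This is the second flagged point: rather than invoking $\star$-invertibility of $\f b_1$, we use that $\f b_1$ is a $t$-finite $t$-ideal, hence divisorial ($\f b_1=\f b_1^t=\f b_1^v$), so $(A:\f b_1)\prod\widehat{\ms G}\subseteq A$ gives $\prod\widehat{\ms G}\subseteq(A:(A:\f b_1))=\f b_1^v=\f b_1$. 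Then, since $\ms F$ is $t$-comaximal and $\f b_1$ is comaximal (in the $t$-sense) with every $\f b\in\widehat{\ms G}$,
$$A=A^t=\left(\prod_{\f b\in\widehat{\ms G}}(\f b_1+\f b)^t\right)^t=\left(\prod_{\f b\in\widehat{\ms G}}(\f b_1+\f b)\right)^t=\left(\prod\widehat{\ms G}+\f c\right)^t$$
for some ideal $\f c\subseteq\f b_1$ (expanding the product, every term except $\prod\widehat{\ms G}$ lies in $\f b_1$). Combined with $\prod\widehat{\ms G}\subseteq\f b_1$ this yields $A\subseteq(\prod\widehat{\ms G}+\f c)^t\subseteq\f b_1^t=\f b_1$, forcing $\f b_1=A$. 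Hence $\ms F\setminus\widehat{\ms G}\subseteq\{A\}$, and since $\widehat{\ms G}$ is finite, $\ms F$ is finite.

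\medskip
\noindent\textbf{Main obstacle.} There is no genuinely new difficulty: the content is entirely in the two bookkeeping observations already isolated in the text before the statement, namely (1) that $t$-finiteness of $\f b_0$ lets $(A:\f b_0)$ commute with localization and reduces principality of $\f aA_{\f m}$ to $t$-principality of $\f b_0A_{\f m}$, and (2) that $t$-finiteness of $\f b_1$ makes it divisorial, which is all that is needed to pass from $(A:\f b_1)\prod\widehat{\ms G}\subseteq A$ to $\prod\widehat{\ms G}\subseteq\f b_1$. The only point requiring a little care is verifying $(A:\f b_0)A_{\f m}=(A_{\f m}:\f b_0A_{\f m})$; this is where the $t$-finiteness hypothesis is used, and it should be cited from \cite[Lemma~3.4(3)]{k} rather than reproved.
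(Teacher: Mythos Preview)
Your proposal is correct and follows exactly the approach the paper takes: the paper's ``proof'' of Proposition~\ref{tlocprinc} is the discussion immediately preceding the statement, which isolates the two places in the proof of Proposition~\ref{starlocprinc} where $\star$-invertibility is invoked and explains why, for the $t$-operation, the hypotheses ``$t$-finite'' and ``$t$-locally $t$-principal'' suffice at those points. You have simply written this out in full, with the same two observations (commutation of $(A:\f b_0)$ with localization via $t$-finiteness plus Kang, and divisoriality of $\f b_1$ via $t$-finiteness) and the remainder of the argument carried over verbatim.
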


We give an application of this result,  to obtain a further
generalization of Bazzoni's conjecture.

Recall  that if $(x_1, \ldots, x_n)_v = (x)$ for some $x$ in $A$,
then $x$ is called the $v$-gcd of $x_1, \ldots, x_n$ and all pairs
of elements of $A$ have a $v$-gcd if and only if $D$ is a
GCD-domain.

\begin{prop}\label{gcd} For a domain $A$ the following conditions
hold:

\begin{enumerate}
\item  if $A$ is a GCD domain then it is $v$-coherent.

\item  if $A$ is a $t$-locally GCD domain, then a nonzero $t$-finite $t$-ideal of $A$ is $t$-locally $t$-principal.

\end{enumerate}
\end{prop}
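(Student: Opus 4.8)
The plan is to reduce both parts to one elementary fact: in a GCD domain the $v$-closure of a nonzero finitely generated ideal $(x_1,\ldots,x_n)$ is the principal ideal $\big(\gcd(x_1,\ldots,x_n)\big)$; equivalently, since $I^t=I^v$ whenever $I$ is finitely generated, in a GCD domain every nonzero finitely generated ideal is $t$-principal. This is standard: for two generators, $(x,y)^{-1}=x^{-1}A\cap y^{-1}A=\frac{1}{xy}(xA\cap yA)=\frac{1}{\gcd(x,y)}A$, using $xA\cap yA=(\mathrm{lcm}(x,y))$ and $\mathrm{lcm}(x,y)\gcd(x,y)=xy$ up to units, so $(x,y)^v=(\gcd(x,y))$; the general case follows by induction on $n$ via $(I+J)^v=(I^v+J)^v$.

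For (1), I would take a nonzero finitely generated ideal $I=(x_1,\ldots,x_n)$ of the GCD domain $A$ and set $d:=\gcd(x_1,\ldots,x_n)$. Using that $(A:I)=(A:I^v)$ for every ideal (if $xI\subseteq A$, then $xI^v=(xI)^v\subseteq A$), one gets $I^{-1}=(A:I)=(A:I^v)=(A:(d))=d^{-1}A$, a principal fractional ideal, in particular $v$-finite; hence $A$ is $v$-coherent. (Alternatively, a GCD domain is a P$v$MD, and P$v$MD's are $v$-coherent, as recalled just before Theorem~\ref{thm:characterization t-finite}.)

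For (2), let $I$ be a nonzero $t$-finite $t$-ideal of the $t$-locally GCD domain $A$. Since $t$ is of finite type I may write $I=(a_1,\ldots,a_n)^t$ with $a_1,\ldots,a_n\in I$. Fix $\f m\in\tmax(A)$; the goal is to show that $(IA_{\f m})^{t_{\f m}}$ is principal. Applying Kang's identity $(\f bA_{\f m})^{t_{\f m}}=(\f b^tA_{\f m})^{t_{\f m}}$ (\cite[Lemma 3.4(3)]{k}) with $\f b=(a_1,\ldots,a_n)$ gives $(IA_{\f m})^{t_{\f m}}=\big((a_1,\ldots,a_n)A_{\f m}\big)^{t_{\f m}}$. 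Now $(a_1,\ldots,a_n)A_{\f m}$ is a finitely generated ideal of $A_{\f m}$, so its $t_{\f m}$-closure equals its $v_{\f m}$-closure, which in the GCD domain $A_{\f m}$ is the principal ideal generated by $\gcd(a_1,\ldots,a_n)$ (computed in $A_{\f m}$). Thus $(IA_{\f m})^{t_{\f m}}$ is principal, i.e.\ $IA_{\f m}$ is $t_{\f m}$-principal; since $\f m\in\tmax(A)$ was arbitrary, $I$ is $t$-locally $t$-principal.

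I do not expect a genuine obstacle. The one point requiring care is the identification $(IA_{\f m})^{t_{\f m}}=\big((a_1,\ldots,a_n)A_{\f m}\big)^{t_{\f m}}$ — one must pass the $t$-generators $a_1,\ldots,a_n$ of $I$ into the localization rather than the whole element-set of $I$ — and this is exactly what Kang's lemma supplies. Everything else is routine manipulation of star operations together with the defining property of GCD domains.
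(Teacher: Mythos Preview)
Your proof is correct and follows essentially the same line as the paper's. For (2) the arguments coincide (Kang's lemma plus the $v$-gcd in the GCD localization); for (1) the paper simply invokes GCD $\Rightarrow$ P$v$MD $\Rightarrow$ $v$-coherent --- which you already list as an alternative --- while your direct computation $I^{-1}=d^{-1}A$ is a slightly more elementary variant of the same fact.
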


\begin{proof} \begin{enumerate}
\item  It follows
from the fact that a GCD domain is a P$v$MD, and P$v$MD's are
$v$-coherent

\item Let $\f a$ be a $t$-finite $t$-ideal and $\m$ a $t$-maximal
ideal. Let $\f b$ be a finitely generated ideal of $D$ such that $\f
b^t = \f a$ and let $x$ be a $v$-GCD of the generators of $\f
bA_\m$. We have $$(\f aA_\m)^{t_\m} = (\f b^tA_\m)^{t_\m} = (\f
bA_\m)^{t_\m} = (xA_\m)^{t_\m}.$$

Thus, $\f a$ is $t$-locally $t$-principal.

\end{enumerate}
\end{proof}

\begin{prop} \label{locgcd}
Let $A$ be a $t$-locally GCD domain. If every $t$-locally principal
ideal of $D$ is $t$-invertible then $D$ has the $t$-finite
character.
\end{prop}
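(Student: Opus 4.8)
The plan is to reduce everything to the machinery already assembled, with essentially no new work. By Proposition \ref{caratterizzazione_star_carattere_finito} applied with $\star = t$, the domain $A$ has the $t$-finite character if and only if for every nonzero $a \in A$ every family $\ms F \in (\Sigma')^t_a$ — that is, every $t$-comaximal collection over $a$ of proper $t$-finite $t$-ideals of $A$ — is finite. So I fix a nonzero $a \in A$ and such a family $\ms F$, and I aim to show that $\ms F$ is finite.

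Each ideal $\f b \in \ms F$ contains $a$, hence is a nonzero $t$-finite $t$-ideal. Since $A$ is $t$-locally GCD, Proposition \ref{gcd}(2) applies to $\f b$ and gives that $\f b$ is $t$-locally $t$-principal. Thus $\ms F$ is in fact a $t$-comaximal collection over $a$ consisting of $t$-finite, $t$-locally $t$-principal $t$-ideals.

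Now I invoke Proposition \ref{tlocprinc}: the standing hypothesis that every $t$-locally principal ideal of $A$ is $t$-invertible is precisely what is required there, and it yields that every $t$-comaximal collection over $a$ of $t$-finite $t$-locally $t$-principal $t$-ideals is finite. In particular $\ms F$ is finite. Since $a$ and $\ms F$ were arbitrary, condition (ii) of Proposition \ref{caratterizzazione_star_carattere_finito} holds, and therefore $A$ has the $t$-finite character.

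There is no genuinely hard step here; the argument is a bookkeeping exercise matching up the hypotheses of the three cited results. The only points deserving a moment's care are (a) checking that the members of a family in $(\Sigma')^t_a$ are automatically nonzero, so that Proposition \ref{gcd}(2) is applicable, and (b) keeping straight the distinction between ``$t$-locally principal'' (appearing in the hypothesis) and ``$t$-locally $t$-principal'' (the formally weaker property produced by Proposition \ref{gcd}(2) and consumed by Proposition \ref{tlocprinc}).
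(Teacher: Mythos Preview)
Your proof is correct and follows essentially the same route as the paper: fix $a$, take a $t$-comaximal family of proper $t$-finite $t$-ideals over $a$, use Proposition~\ref{gcd}(2) to upgrade the members to $t$-locally $t$-principal, apply Proposition~\ref{tlocprinc} to conclude finiteness, and then deduce the $t$-finite character. The only cosmetic difference is that you invoke Proposition~\ref{caratterizzazione_star_carattere_finito} directly at the final step, whereas the paper cites Theorem~\ref{thm:characterization t-finite} (and for that reason first records that $A$ is $t$-locally $v$-coherent); since the (i)$\Leftrightarrow$(ii) part of Theorem~\ref{thm:characterization t-finite} is exactly Proposition~\ref{caratterizzazione_star_carattere_finito} and needs no $v$-coherence, your version is marginally more direct but not a different argument.
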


\begin{proof}
Note that by Proposition \ref{gcd}(2), $A$ is $t$-locally
$v$-coherent.  Let $a$ be an element of $A$. Take a $t$-comaximal
collection $\mathcal F$ over $a$ of proper $t$-finite $t$-ideals. By
Proposition \ref{gcd}, $\mathcal{F}$ is in particular a
$t$-comaximal collection over $a$ of $t$-finite $t$-locally
$t$-principal $t$-ideals. So, it is finite by Proposition
\ref{tlocprinc}. Hence $D$ has the $t$-finite character by Theorem
\ref{thm:characterization t-finite}.
\end{proof}

Proposition \ref{locgcd} extends the $t$-version of Bazzoni's
conjecture (\cite[Proposition 6.11]{hk} and \cite[Proposition
5]{zaf}) to $t$-locally GCD domains. Note that locally GCD domains
are not necessarily P$v$MD (\cite[Example 2.6]{zaf2}). Now, if $\p$
is a $t$-maximal ideal of a locally GCD domain $A$, there exists a
maximal ideal $\m$ of $A$ such that $\p \subseteq \m$. So $A_\p$ is
a localization of the GCD domain $A_\m$, and so it is a GCD domain.
Thus a locally GCD domain is also $t$-locally GCD and so the class
of $t$-locally GCDs is properly larger than the class of P$v$MDs.

\smallskip
However, we can  prove that a $t$-locally GCD domain $A$ such that
every $t$-locally principal ideal of $A$ is $t$-invertible is a
P$v$MD. More precisely, we will show that a $t$-locally GCD domain
with the $t$-finite character is a P$v$MD.

%
%
%
%

First we notice that, slightly modifying   the proof of Proposition
\ref{starlocprinc}, similarly to as we have done to obtain
Proposition \ref{tlocprinc}, we have the following result.

\begin{prop} \label{t-loc t-inv}
Let $A$ be an integral domain and $a\in A\setminus\{0\}$. Assume
that every $t$-locally $t$-invertible $t$-ideal of $A$ is
$t$-invertible. Then every $t$-comaximal collection of   $t$-locally
$t$-invertible $t$-ideals over $a$ is finite.
\end{prop}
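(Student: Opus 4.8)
The plan is to follow the proof of Proposition \ref{starlocprinc} essentially line by line, substituting the r\^ole of $\star$-invertibility by that of $t$-local $t$-invertibility, exactly as was done to pass from Proposition \ref{starlocprinc} to Proposition \ref{tlocprinc}. So let $\ms F$ be a $t$-comaximal collection over $a$ of $t$-locally $t$-invertible $t$-ideals. The first remark I would make is that, applying the standing hypothesis to each member of $\ms F$ separately, every $\f b\in\ms F$ is $t$-invertible, hence (by the preliminaries) $t$-finite; this is the information that will make the closing computation go through unchanged. Then I would introduce, as in Proposition \ref{starlocprinc}, the ideal
\[
\f a:=\left\{x\in A: x\prod\ms F'\subseteq (a)\ \text{for some finite subset}\ \ms F'\subseteq\ms F\right\},
\]
which by Lemma \ref{claim}(b) is a $t$-ideal, and for which Lemma \ref{claim}(c) gives $\f aA_{\f m}=aA_{\f m}$ when $\f m\in\tmax(A)$ contains no member of $\ms F$, and $\f aA_{\f m}=a(A:\f b_0)A_{\f m}$ when $\f b_0\in\ms F$ is the unique member of $\ms F$ lying in $\f m$.

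The only step that genuinely differs from Proposition \ref{starlocprinc} — and this is the one real obstacle — is checking that $\f a$ is $t$-locally $t$-invertible (in Proposition \ref{starlocprinc} the ideals of $\ms F$ were $\star$-invertible, which forced $\f aA_{\f m}$ to be \emph{principal}; here one only gets $t_{\f m}$-invertibility). For $\f m$ containing no member of $\ms F$ there is nothing to do: $\f aA_{\f m}=aA_{\f m}$ is principal. For $\f m$ containing the member $\f b_0$, since $\f b_0$ is $t$-finite one has $(A:\f b_0)A_{\f m}=(A_{\f m}:\f b_0A_{\f m})$, Kang's identity \cite[Lemma 3.4(3)]{k} being what lets one localize the conductor; since $\f b_0$ is $t$-locally $t$-invertible, $\f b_0A_{\f m}$ is $t_{\f m}$-invertible, and because $t_{\f m}$-invertibility is preserved by taking inverses and by multiplication by the invertible ideal $aA_{\f m}$, the ideal $\f aA_{\f m}=a(A:\f b_0)A_{\f m}=a(A_{\f m}:\f b_0A_{\f m})$ is $t_{\f m}$-invertible. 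Hence $\f a$ is a $t$-locally $t$-invertible $t$-ideal, so by hypothesis $\f a$ is $t$-invertible, in particular $t$-finite.

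From here the argument is a transcription of the end of the proof of Proposition \ref{starlocprinc}. Lemma \ref{claim}(d) produces a finite subcollection $\widehat{\ms G}\subseteq\ms F$ with $\f a=\{x\in A: x\prod\widehat{\ms G}\subseteq (a)\}$. Given any $\f b_1\in\ms F\setminus\widehat{\ms G}$, the definition of $\f a$ forces $a(A:\f b_1)\subseteq\f a$, hence $(A:\f b_1)\prod\widehat{\ms G}\subseteq A$, and then the $t$-invertibility of the $t$-ideal $\f b_1$ gives
\[
\prod\widehat{\ms G}=\prod\widehat{\ms G}\,\bigl((A:\f b_1)\f b_1\bigr)^t\subseteq\Bigl(\prod\widehat{\ms G}\,(A:\f b_1)\,\f b_1\Bigr)^t\subseteq\f b_1^t=\f b_1.
\]
Writing $\prod_{\f b\in\widehat{\ms G}}(\f b_1+\f b)=\prod\widehat{\ms G}+\f c$ with $\f c\subseteq\f b_1$, and using the $t$-comaximality of $\ms F$ (so $(\f b_1+\f b)^t=A$ for each $\f b\in\widehat{\ms G}$), yields $A=\bigl(\prod\widehat{\ms G}+\f c\bigr)^t\subseteq\f b_1^t=\f b_1$. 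Hence $\ms F\setminus\widehat{\ms G}\subseteq\{A\}$, so $\ms F$ is finite, as wanted. In short, only the middle paragraph requires an idea beyond Proposition \ref{starlocprinc}; the rest is bookkeeping.
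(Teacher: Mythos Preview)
Your proof is correct and follows essentially the paper's approach, which merely says to modify the proof of Proposition~\ref{starlocprinc} ``similarly to as we have done to obtain Proposition~\ref{tlocprinc}''. You make explicit the key observation---implicit in the paper's sketch---that the standing hypothesis applied to each $\f b\in\ms F$ forces it to be $t$-invertible (hence $t$-finite), which is precisely what is needed both to localize the conductor $(A:\f b_0)$ and to carry out the final containment $\prod\widehat{\ms G}\subseteq\f b_1$.
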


\begin{prop}\label{locpvmd}
Let $A$ be a $t$-locally P$v$MD. The following are equivalent:

\begin{enumerate}
\item[(i)] Each $t$-locally $t$-invertible $t$-ideal is $t$-invertible.

\item[(ii)] $A$ has the $t$-finite character.

\item[(iii)] $A$ is a P$v$MD with the $t$-finite character.
\end{enumerate}
\end{prop}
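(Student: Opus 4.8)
\textbf{Proof plan for Proposition \ref{locpvmd}.}

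The plan is to establish the cycle (iii) $\Rightarrow$ (ii) $\Rightarrow$ (i) $\Rightarrow$ (ii) and separately (ii) $\Rightarrow$ (iii), so that all three statements become equivalent. The implication (iii) $\Rightarrow$ (ii) is trivial, since it only drops a hypothesis. For (ii) $\Rightarrow$ (i): if $A$ has the $t$-finite character, then by Theorem \ref{t-locally t-finite} every $t$-locally $t$-finite $t$-ideal is $t$-finite; in particular a $t$-locally $t$-invertible $t$-ideal $I$ is $t$-finite, and one checks $t$-invertibility locally exactly as in the proof of Proposition \ref{t-loc star-loc}: writing $I = (a_1,\dots,a_n)^t$ and using that $I A_{\f m}$ is $t_{\f m}$-invertible hence $t_{\f m}$-principal (since $A_{\f m}$ is a P$v$MD, where $t$-invertible and $t$-locally principal coincide), one gets $(II^{-1})A_{\f m} = A_{\f m}$ for every $\f m \in \tmax(A)$, so $(II^{-1})^t = A$.

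The heart of the argument is (i) $\Rightarrow$ (ii), and here I would invoke the machinery already set up. First note that since $A$ is $t$-locally P$v$MD and P$v$MD's are $v$-coherent (Proposition \ref{gcd}(1) and the surrounding discussion), $A$ is $t$-locally $v$-coherent, so Theorem \ref{thm:characterization t-finite} applies. By that theorem it suffices to show that every family $\ms F \in (\Sigma')^t_a$ of $t$-finite $t$-comaximal $t$-ideals over a fixed nonzero $a$ is finite. Given such an $\ms F$, I would form $\f a := \{x \in A : x\prod\ms F' \subseteq (a) \text{ for some finite } \ms F' \subseteq \ms F\}$ as in Lemma \ref{claim} and Proposition \ref{tlocprinc}. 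By Lemma \ref{claim}(c), $\f a A_{\f m}$ is either $aA_{\f m}$ or $a(A:\f b_0)A_{\f m} = (A_{\f m} : \f b_0 A_{\f m})$, where $\f b_0$ is $t$-finite; since $A_{\f m}$ is a P$v$MD, $\f b_0 A_{\f m}$ is $t_{\f m}$-invertible, hence so is its inverse, hence $\f a A_{\f m}$ is $t_{\f m}$-invertible. Thus $\f a$ is a $t$-locally $t$-invertible $t$-ideal, so by hypothesis (i) it is $t$-invertible, in particular $t$-finite. Then Lemma \ref{claim}(d) gives a finite $\widehat{\ms G} \subseteq \ms F$ with $\f a = \{x : x\prod\widehat{\ms G}\subseteq (a)\}$, and repeating verbatim the endgame of the proof of Proposition \ref{starlocprinc}/\ref{tlocprinc}—for $\f b_1 \in \ms F\setminus\widehat{\ms G}$ one gets $(A:\f b_1)\prod\widehat{\ms G}\subseteq A$, hence $\prod\widehat{\ms G}\subseteq \f b_1^v = \f b_1$ (using that $\f b_1$ is a $t$-finite $t$-ideal, hence divisorial), and then $t$-comaximality forces $A \subseteq \f b_1^t = \f b_1$—one concludes $\ms F\setminus\widehat{\ms G} = \{A\}$, so $\ms F$ is finite. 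Applying Theorem \ref{thm:characterization t-finite} yields the $t$-finite character.

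It remains to prove (ii) $\Rightarrow$ (iii), i.e.\ that a $t$-locally P$v$MD with the $t$-finite character is globally a P$v$MD. The plan is to use the characterization that $A$ is a P$v$MD iff $A_{\f m}$ is a valuation domain for every $t$-maximal ideal $\f m$, equivalently iff every nonzero finitely generated ideal of $A$ is $t$-invertible. Since each $A_{\f m}$ is already a P$v$MD, it is enough to show that $A_{\f m}$ is in fact a valuation domain for $\f m \in \tmax(A)$; here I would use that under $t$-finite character $t$ behaves well with respect to localization at $t$-maximal ideals, combined with the fact that in a P$v$MD localizing at a $t$-maximal ideal gives a valuation domain, so $A_{\f m} = (A_{\f m})_{\f m A_{\f m}}$ is a valuation domain. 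Alternatively, and more robustly, take a nonzero finitely generated ideal $I$ of $A$; for each $t$-maximal $\f m$, $I A_{\f m}$ is $t_{\f m}$-invertible since $A_{\f m}$ is a P$v$MD, so $I$ is $t$-locally $t$-invertible, and then the $t$-finite character (via Theorem \ref{t-locally t-finite}, so that $I$ is $t$-finite, plus the local check as in (ii) $\Rightarrow$ (i)) makes $I$ $t$-invertible; since this holds for all finitely generated $I$, $A$ is a P$v$MD.

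\textbf{Main obstacle.} The delicate point is the compatibility of the $t$-operation with localization, which is used repeatedly: identities like $(A:\f b_0)A_{\f m} = (A_{\f m}:\f b_0 A_{\f m})$ and $(I A_{\f m})^{t_{\f m}} = (I^t A_{\f m})^{t_{\f m}}$ are valid only for $t$-finite $\f b_0$ or via Kang's lemma (\cite[Lemma 3.4(3)]{k}), and one must be careful that "$t$-invertible locally" really transfers to the localization as "$t_{\f m}$-invertible"—this is exactly why $t$-finiteness of $\f a$ (obtained from hypothesis (i)) is essential before one can run the local computation. Once these localization identities are handled, everything else is a rerun of the already-established proofs of Theorem \ref{thm:characterization t-finite}, Proposition \ref{tlocprinc} and Proposition \ref{t-loc t-inv}.
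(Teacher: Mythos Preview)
Your approach is correct and coincides with the paper's in substance; the differences are organizational. The paper proves (i)$\Rightarrow$(iii) directly: it first observes that every $t$-finite $t$-ideal is $t$-locally $t$-invertible (since $A$ is $t$-locally P$v$MD) and hence $t$-invertible by (i), so $A$ is a P$v$MD; this yields \emph{global} $v$-coherence, after which the paper invokes Proposition~\ref{t-loc t-inv} as a black box together with Theorem~\ref{thm:characterization t-finite} to obtain the $t$-finite character. You instead defer the P$v$MD conclusion to a separate step (ii)$\Rightarrow$(iii) and use $t$-\emph{local} $v$-coherence in (i)$\Rightarrow$(ii), inlining the proof of Proposition~\ref{t-loc t-inv} via Lemma~\ref{claim}. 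Both routes arrive at the same place; the paper's is slightly shorter because establishing P$v$MD first lets it cite Proposition~\ref{t-loc t-inv} rather than rerun the argument.

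One imprecision to fix in your (ii)$\Rightarrow$(i): the claim ``$IA_{\f m}$ is $t_{\f m}$-invertible hence $t_{\f m}$-principal'' is not correct (a P$v$MD need not have its $t$-invertible $t$-ideals $t$-principal), and the computation in Proposition~\ref{t-loc star-loc} genuinely uses that $IA_{\f m}$ is principal, not merely $t_{\f m}$-principal. What you do obtain, once Theorem~\ref{t-locally t-finite} gives $I$ $t$-finite, is $I^{-1}A_{\f m}=(A_{\f m}:IA_{\f m})$, whence $(II^{-1}A_{\f m})^{t_{\f m}}=A_{\f m}$ by $t_{\f m}$-invertibility; then $(II^{-1})^t=\bigcap_{\f m}(II^{-1}A_{\f m})^{t_{\f m}}=A$ by Proposition~\ref{twedge}, using the $t$-finite character. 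This is presumably what the paper's one-line ``follows from Theorem~\ref{t-locally t-finite} for any integral domain $A$'' intends, and note that it does not require the P$v$MD hypothesis on $A_{\f m}$.
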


\begin{proof}
(iii) $\Rightarrow$ (ii) It is trivial.

(ii) $\Rightarrow$ (i) This follows from Theorem \ref{t-locally
t-finite} for any integral domain $A$.

(i) $\Rightarrow$ (iii) Let $I$ be a $t$-finite $t$-ideal of $A$.
Then $I$ is $t$-locally finite, so it is $t$-locally $t$-invertible
(since $A$ is a $t$-locally P$v$MD) and, by assumption (i), $I$ is
$t$-invertible. This shows that $A$ is a P$v$MD.

Now, let $\ms F$ be a $t$-comaximal collection of $t$-finite
$t$-ideals (over a nonzero element $a \in A$).
 Then, by the fact that $A$ is a $t$-locally P$v$MD, $\ms F$
is also a $t$-comaximal family of $t$-locally $t$-invertible
$t$-ideals. By  assumption (i) and   by Proposition \ref{t-loc
t-inv}, it follows that $\ms F$ is finite. Since a P$v$MD is
$v$-coherent,   $A$ has the $t$-finite character by Theorem
\ref{thm:characterization t-finite}.
\end{proof}

We easily have the following corollary.

\begin{cor}\label{locgcd2}
Let $A$ be a $t$-locally GCD domain. The following are equivalent:

\begin{enumerate}
\item[(i)] Each $t$-locally principal $t$-ideal is $t$-invertible.
\item[(ii)] $A$ has the $t$-finite character.
\item[(iii)] $A$ is a P$v$MD with the $t$-finite character.
\end{enumerate}
\end{cor}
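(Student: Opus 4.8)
The statement to prove is Corollary \ref{locgcd2}, which asserts the equivalence of three conditions for a $t$-locally GCD domain $A$: (i) each $t$-locally principal $t$-ideal is $t$-invertible; (ii) $A$ has the $t$-finite character; (iii) $A$ is a P$v$MD with the $t$-finite character. The natural approach is to mimic the structure of the proof of Proposition \ref{locpvmd}, showing the cycle (iii) $\Rightarrow$ (ii) $\Rightarrow$ (i) $\Rightarrow$ (iii), and to reduce as much as possible to results already established.

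\textbf{Proof.}
(iii) $\Rightarrow$ (ii) is trivial.

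(ii) $\Rightarrow$ (i): By Theorem \ref{t-locally t-finite}, if $A$ has the $t$-finite character then every $t$-locally $t$-finite $t$-ideal of $A$ is $t$-finite; in particular every $t$-locally principal $t$-ideal is $t$-finite. By Proposition \ref{t-loc star-loc} (applied with $\star = t$, or directly by the argument there), a $t$-finite $t$-ideal which is $t$-locally principal is $t$-invertible. Hence (i) holds. (This implication, as in Proposition \ref{locpvmd}, requires no hypothesis on $A$ beyond being a domain.)

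(i) $\Rightarrow$ (iii): First we show that (i) implies $A$ has the $t$-finite character, so that we may invoke Proposition \ref{locgcd}; in fact Proposition \ref{locgcd} already states precisely this. Indeed, by Proposition \ref{gcd}(2), $A$ being $t$-locally GCD is $t$-locally $v$-coherent, and Proposition \ref{locgcd} concludes that $A$ has the $t$-finite character under hypothesis (i). It remains to show $A$ is a P$v$MD. Let $I$ be a $t$-finite $t$-ideal of $A$. By Proposition \ref{gcd}(2), since $A$ is $t$-locally GCD, $I$ is $t$-locally $t$-principal; in particular $I$ is $t$-locally principal in the sense that $I A_\m$ is principal up to $t_\m$-closure for each $\m \in \tmax(A)$. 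Thus $I$ is a $t$-locally principal $t$-ideal, and by hypothesis (i) it is $t$-invertible. Since every $t$-finite $t$-ideal of $A$ is $t$-invertible, $A$ is a P$v$MD. Combining this with the $t$-finite character obtained above gives (iii).

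\textbf{Where the work is.} Most of the content is bookkeeping: the substantive inputs are Proposition \ref{gcd}(2) (which converts the $t$-locally GCD hypothesis into the statement that $t$-finite $t$-ideals are $t$-locally $t$-principal, hence $t$-locally principal) and Proposition \ref{locgcd} (which supplies the $t$-finite character from hypothesis (i)). The only subtlety I expect is a definitional one: in (i) the phrase ``$t$-locally principal $t$-ideal'' must be interpreted so that a $t$-finite $t$-ideal which is $t$-locally $t$-principal qualifies — i.e. one must check that ``$(I A_\m)^{t_\m}$ is principal for all $\m$'' is what is meant by $t$-locally principal in this context, and that Proposition \ref{locgcd} and Proposition \ref{t-loc star-loc} are being applied to ideals of exactly this kind. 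Once that is pinned down, the corollary is immediate from the cited propositions; accordingly the proof is short.
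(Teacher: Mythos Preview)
Your cycle (iii) $\Rightarrow$ (ii) $\Rightarrow$ (i) $\Rightarrow$ (iii) is the right shape, and your (ii) $\Rightarrow$ (i) via Theorem \ref{t-locally t-finite} together with the argument of Proposition \ref{t-loc star-loc} is fine. The paper's own proof is the one-liner ``we easily have the following corollary'': since a GCD domain is a P$v$MD, a $t$-locally GCD domain is $t$-locally P$v$MD, so Proposition \ref{locpvmd} already gives (ii) $\Leftrightarrow$ (iii) and (ii) $\Rightarrow$ (i), while Proposition \ref{locgcd} gives (i) $\Rightarrow$ (ii).

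The one place your argument does not close is exactly the definitional worry you raised, and it is a genuine gap rather than a matter of convention. In this paper ``$t$-locally principal'' means that $IA_{\f m}$ is literally principal for each $\f m\in\tmax(A)$ (this is how the hypothesis is used in Proposition \ref{starlocprinc} and in the discussion preceding Proposition \ref{tlocprinc}), whereas Proposition \ref{gcd}(2) only gives that $(IA_{\f m})^{t_{\f m}}$ is principal. For a $t$-ideal $I$, the extension $IA_{\f m}$ need not be a $t_{\f m}$-ideal, so you cannot conclude $IA_{\f m}$ itself is principal, and hypothesis (i) does not apply directly to $I$.

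The fix is painless and is essentially the paper's route: once you have (ii) from Proposition \ref{locgcd}, observe that $A$ is $t$-locally P$v$MD (GCD $\Rightarrow$ P$v$MD) and invoke Proposition \ref{locpvmd} to get (ii) $\Rightarrow$ (iii). If you prefer to keep your direct argument, note instead that a $t$-locally $t$-principal $t$-ideal is in particular $t$-locally $t$-invertible, and then use Proposition \ref{locpvmd} (ii) $\Rightarrow$ (i) (valid for any domain) to conclude $t$-invertibility; either way you are routing through Proposition \ref{locpvmd} rather than through hypothesis (i) of the corollary.
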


\bigskip

\end{document}